\newtheorem{theorem}{Theorem}[section]
\newtheorem{proposition}[theorem]{Proposition}
\newtheorem{lemma}[theorem]{Lemma}
\newtheorem{definition}{Definition}[section]
\newtheorem{example}{Example}[section]
\newcommand{\aut}{{\mathrm{Aut}\hspace{0.1em}}}
    \newcommand*{\addFileDependency}[1]{
    \typeout{(#1)}
    \@addtofilelist{#1}
    \IfFileExists{#1}{}{\typeout{No file #1.}}
    }
\newtheorem{Theorem}{Theorem}[section]
\newtheorem{Lemma}[Theorem]{Lemma}
\theoremstyle{Definition}
\newtheorem{Definition}[Theorem]{Definition}
\newtheorem{Example}[Theorem]{Example}
\theoremstyle{Remark}
\theoremstyle{notation}
\title{Multi-scale symmetry analysis in molecular structures}
\author[1,2]{Jing-Wen Gao}
\author[1]{Yunan He}
\author[1]{Jian Liu\thanks{Corresponding author: liujian@cqut.edu.cn}}
\affil[1]{Mathematical Science Research Center, Chongqing University of Technology, Chongqing 400054, China}
\affil[2]{Mathematical Science School, Huazhong University of Science and Technology, Wuhan 430074, China}
    \renewcommand*{\@fnsymbol}[1]{\ensuremath{\ifcase#1\or \dagger\or *\or *\or
   \mathsection\or \else\@ctrerr\fi}}
\date{}
\begin{document}
    \maketitle

    \paragraph{Abstract}
    Topological data analysis (TDA), as a relatively recent approach, has demonstrated great potential in capturing the intrinsic and robust structural features of complex data. While persistent homology, as a core tool of TDA, focuses on characterizing geometric shapes and topological structures, the automorphism groups of Vietoris-Rips complexes can capture the structured symmetry features of data. In this work, we propose a multi-scale symmetry analysis approach that leverages persistent automorphism modules to quantify variations in symmetries across scales. By modifying the category of graphs and constructing a suitable functor from the graph category to the category of modules, we ensure that the persistent automorphism module forms a genuine persistence module. Furthermore, we apply this framework to the structural analysis of fullerenes, predicting the stability of 12 fullerene molecules with a competitive correlation coefficient of 0.979.

    \paragraph{Keywords}
     Symmetry, automorphism group, persistent automorphism, persistence module, fullerene molecule.

\footnotetext[1]
{ {\bf 2020 Mathematics Subject Classification.}  	Primary  55N31; Secondary 20B25, 52B70.
}

\tableofcontents 

\section{Introduction}

Persistent homology, developed over the past two decades, has become a fundamental method for capturing the geometric shape and topological structure of data. It has achieved significant success, particularly in the characterization of structured and robust features in complex datasets \cite{GC2005,GC2009,HE2002}. Over time, various extensions of persistent homology, such as persistent cohomology \cite{de2011persistent, cang2020persistent, schonsheck2024spherical} and persistent Laplacians \cite{memoli2022persistent, liu2023algebraic, wang2020persistent}, have also been developed.

The core idea of persistent homology is to capture the changes in topological structures of data, such as connected components, loops, and cavities, at different scales. These topological structures represent intrinsic, stable invariants in both space and data. In contrast, geometric information, such as symmetry, is also a critical feature. The automorphism group of a space provides a powerful algebraic tool to describe its symmetries, making it an essential part of understanding the underlying structure and behavior of spatial objects. Motivated by persistent homology, we aim to combine the multi-scale information from persistent homology with symmetry features for data analysis.

Currently, the application of symmetry in data analysis is primarily focused on symmetry detection in images or geometric shapes \cite{marola2002detection,sun2011reflection,mitra2006partial}. Recently, the work in \cite{liu2025parametrization} develops a comprehensive theory for analyzing the persistent symmetries and degrees of asymmetry in finite point configurations within metric spaces. However, the use of symmetry in data to analyze the structure and distribution of the data itself is still in its early stages.

In this work, we propose a multi-scale symmetry analysis (MSA) method for analyzing point cloud data. First, for point cloud data, we construct the corresponding Vietoris-Rips complex, whose automorphism group is isomorphic to the proximity graph of the point cloud. This allows us to transform the study of the automorphism group of simplicial complexes into the study of the automorphism group of graphs. Furthermore, we improve the graph category and construct a functor from the graph category to the module category, leading to the construction of persistent automorphism modules. In the application, we consider the order and symmetry degree of the automorphism group as features that characterize the richness of symmetry, which are then applied to data analysis. Finally, we analyze the symmetry of fullerenes to examine the stability of their structure, achieving a relatively high correlation coefficient of 0.979.

The paper is organized as follows. In the next section, we review the automorphisms of graphs and simplicial complexes. Section \ref{section:persistence} introduces the main methods. In Section \ref{section:application}, multi-scale symmetry analysis is applied to the stability analysis of fullerenes. The final section provides a summary and conclusion.

\section{Combinatorial automorphism group}

The automorphism group is a crucial geometric feature, reflecting the symmetry of a geometric object. Our work aims to study the symmetry of data, which precisely requires the automorphism groups of graphs and simplicial complexes as a foundation. In this section, we recall some fundamental concepts related to automorphism groups of graphs and simplicial complexes. A more detailed discussion on the automorphism groups of graphs or simplicial complexes can be found in the works of \cite{armstrong1988presentation, LB1995, DB2001, de2000topics}.

\subsection{Automorphism group of graphs}

\begin{definition}
{\rm An \emph{automorphism} of a graph $G = (V, E)$ is a permutation $\sigma: V \to V$ of the vertex set $V$ such that the edge set $E$ is preserved, i.e., $\{u, v\} \in E$ if and only if $\{\sigma(u), \sigma(v)\} \in E$. The set of all automorphisms of $G$, denoted $\operatorname{Aut}(G)$, forms a group under function composition, called the \emph{automorphism group} of $G$, defined as
\[
\operatorname{Aut}(G) = \{\sigma: V \to V \mid \sigma \text{ is a bijection and } \{u, v\} \in E \iff \{\sigma(u), \sigma(v)\} \in E\}.
\]}
\end{definition}

In other words, an automorphism $\sigma \in \operatorname{Aut}(G)$ is a bijection of the vertex set $V$ that preserves adjacency: for all $u,v \in V$, $\{u,v\} \in E$ if and only if $\{\sigma(u),\sigma(v)\} \in E$.

The order of the automorphism group $\operatorname{Aut}(G)$ reflects the symmetry of a graph $G$. A larger order indicates that more vertex permutations preserve the edge structure, meaning the graph is highly symmetric. Conversely, if $|\operatorname{Aut}(G)| = 1$, the graph is asymmetric, having no nontrivial automorphisms. For example, a complete graph or an empty graph on $n$ vertices has $|\operatorname{Aut}(G)| = n!$, indicating maximal symmetry, while most random graphs have a trivial automorphism group, reflecting minimal symmetry. Thus, the order of $\operatorname{Aut}(G)$ is a simple quantitative measure of a graph's symmetry.

In Table~\ref{table:examples}, we present the automorphism groups of various graphs. It is evident that even graphs with very different structures, such as the complete graph $K_5$ and the Petersen graph, can possess automorphism groups of the same order. Nevertheless, this does not prevent us from regarding both graphs as highly symmetric.

\begin{table}[h]
\centering
\begin{tabular}{c|c|c|c|c}
\hline
Graph & Vertices & $\operatorname{Aut}(G)$ & Order & Symmetry Description \\
\hline
Complete graph $K_n$ & $n$ & $S_n$ & $n!$ & Highly symmetric \\
Empty graph $\overline{K_n}$ & $n$ & $S_n$ & $n!$ & Highly symmetric \\
Path $P_n$ & $n$ & $\mathbb{Z}_2$ & $2$ & Symmetry limited (flip) \\
Cycle $C_n$ & $n$ & $D_{2n}$ & $2n$ & Rotations + reflections \\
Petersen graph & 10 & $S_5$ & 120 & Highly symmetric \\
Complete bipartite graph $K_{m,n}$ & $m+n$ & $S_m \times S_n$ & $m! n!$ & Partite sets permutable \\
\hline
\end{tabular}
\caption{Examples of graph automorphism groups}\label{table:examples}
\end{table}

\subsection{Automorphism group of simplcial complexes}

\begin{definition}
{\rm Let $K$ be an abstract simplicial complex with vertex set $V(K)$. An \textbf{automorphism} of $K$ is a permutation $\varphi : V(K) \to V(K)$ such that for every simplex $\sigma \subseteq V(K)$, if $\sigma \in K$ then $\varphi(\sigma) \in K$. In other words, $\varphi$ preserves the simplicial structure of $K$.

The set of all automorphisms of $K$, equipped with composition, forms a group called the \textbf{automorphism group} of $K$, denoted by $\mathrm{Aut}(K)$.}
\end{definition}

\begin{definition}
{\rm Let $G$ be a finite simple graph with vertex set $V(G)$. The \emph{flag complex} (also called the \emph{clique complex}) of $G$, denoted by $F(G)$, is the simplicial complex whose vertex set is $V(G)$, and where a finite set of vertices $\{v_{0}, \dots, v_{k}\} \subseteq V(G)$ spans a $k$-simplex in $F(G)$ if and only if these vertices form a complete subgraph (clique) in $G$.}
\end{definition}

Equivalently, $F(G)$ is the maximal simplicial complex having $G$ as its $1$-skeleton.

\begin{theorem}\label{theorem:isomorphism}
Let $G$ be a finite simple graph, and let $F(G)$ denote its clique complex (i.e., the flag complex generated by the cliques of $G$). Then there is a natural group isomorphism
\[
\operatorname{Aut}(G) \;\cong\; \operatorname{Aut}(F(G)).
\]
\end{theorem}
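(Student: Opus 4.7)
The plan is to construct the isomorphism explicitly by sending a graph automorphism to the simplicial map it induces on the flag complex, and then to invert this by restricting a simplicial automorphism to the vertex set (which is the 1-skeleton). The key point is that in a flag complex, the 1-skeleton records the graph exactly, and higher simplices are completely determined by it as cliques.

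First, I would define $\Phi : \operatorname{Aut}(G) \to \operatorname{Aut}(F(G))$ by $\Phi(\sigma) = \sigma$, viewed as a permutation of $V(F(G)) = V(G)$. To see this is well-defined, take any simplex $\{v_0, \dots, v_k\} \in F(G)$; by definition of the flag complex this is a clique of $G$, and since $\sigma$ preserves adjacency in both directions, $\{\sigma(v_0), \dots, \sigma(v_k)\}$ is again a clique, hence a simplex of $F(G)$. Thus $\Phi(\sigma) \in \operatorname{Aut}(F(G))$. It is immediate that $\Phi$ is a group homomorphism (composition of permutations on $V$ is literal composition on both sides) and injective (if $\Phi(\sigma) = \operatorname{id}_{V}$ then $\sigma = \operatorname{id}$).

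For surjectivity, given $\varphi \in \operatorname{Aut}(F(G))$, I would restrict $\varphi$ to $V(G)$ and show the resulting permutation lies in $\operatorname{Aut}(G)$. The observation to make is that in a simple graph, a pair $\{u,v\} \subseteq V(G)$ is an edge of $G$ if and only if it is a 1-simplex of $F(G)$ (being a 2-element clique). Since $K = F(G)$ is finite, the map $\varphi$ is a bijection on the finite set of simplices of $K$, so $\varphi^{-1}$ also carries simplices to simplices and is itself an automorphism of $F(G)$. Applying $\varphi$ and $\varphi^{-1}$ to 1-simplices then yields
\[
\{u,v\} \in E(G) \iff \{u,v\} \in F(G) \iff \{\varphi(u),\varphi(v)\} \in F(G) \iff \{\varphi(u),\varphi(v)\} \in E(G),
\]
so $\varphi|_{V(G)} \in \operatorname{Aut}(G)$ and $\Phi(\varphi|_{V(G)}) = \varphi$.

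Finally, I would note naturality: the construction does not depend on any choice, since both groups act on the same underlying set $V(G)$ and $\Phi$ is simply the identity on permutations, which makes the isomorphism canonical. The only subtle point, and the one I would be most careful with, is ensuring the ``iff'' in the adjacency condition for a flag-complex automorphism; this is where finiteness of $V(G)$ (so that a simplex-preserving bijection has a simplex-preserving inverse) is quietly used. Everything else is essentially a translation between the language of cliques in $G$ and simplices in $F(G)$.
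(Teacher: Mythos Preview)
Your proposal is correct and follows essentially the same route as the paper: define $\Phi$ by letting a graph automorphism act on vertices and extend to cliques, check it is a well-defined injective homomorphism, and obtain surjectivity by restricting a simplicial automorphism to the $1$-skeleton. The only difference is that you are a bit more explicit than the paper about why the adjacency ``iff'' holds for the restricted map (invoking finiteness so that $\varphi^{-1}$ is again simplex-preserving), which is a welcome point of care but not a different argument.
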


\begin{proof}
Define a map
\[
\Phi : \operatorname{Aut}(G) \longrightarrow \operatorname{Aut}(F(G))
\]
as follows: given a graph automorphism $\sigma \in \operatorname{Aut}(G)$, let $\Phi(\sigma)$ act on the vertices of $F(G)$ in the same way, and extend linearly to all simplices. Since $\sigma$ preserves adjacency, it sends every clique to a clique, hence $\Phi(\sigma)$ is a simplicial automorphism. Thus $\Phi$ is a well-defined group homomorphism.

We first show that $\Phi$ is injective. If $\Phi(\sigma)$ is the identity on $F(G)$, then it fixes every vertex of $F(G)$. Since the vertex set of $F(G)$ is exactly the vertex set of $G$, it follows that $\sigma$ is the identity automorphism of $G$. Thus $\ker(\Phi) = \{ e \}$.

To prove surjectivity, let $\tau \in \operatorname{Aut}(F(G))$ be a simplicial automorphism. Then $\tau$ induces a bijection of the vertex set preserving all simplices of $F(G)$. Since $F(G)$ is a flag complex, it is uniquely determined by its $1$-skeleton, which is exactly the graph $G$. Therefore $\tau$ preserves edges of $G$, and hence corresponds to a graph automorphism $\sigma \in \operatorname{Aut}(G)$. By construction, $\Phi(\sigma) = \tau$. This shows that $\Phi$ is surjective.

Therefore $\Phi$ is an isomorphism, and we obtain the natural group isomorphism
\[
\operatorname{Aut}(G) \;\cong\; \operatorname{Aut}(F(G)).
\]
This completes the proof.
\end{proof}

\begin{example}
{\rm Let $G$ be the graph obtained from the triangle $C_3$ with vertices $\{0,1,2\}$ and edges $\{\{0,1\},\{1,2\},\{2,0\}\}$ by adding a new vertex $3$ and an edge $\{0,3\}$. The vertex set is $V(G) = \{0,1,2,3\}$ and the edge set is
\[
E(G) = \{\{0,1\}, \{1,2\}, \{2,0\}, \{0,3\}\}.
\]
The flag complex $F(G)$ has vertices $\{0,1,2,3\}$, edges corresponding to the edges of $G$, and a $2$-simplex corresponding to the triangle $\{0,1,2\}$. There is no higher simplex because the new vertex $3$ is only connected to $0$.

The automorphism group of $G$, $\operatorname{Aut}(G)$, consists of the permutations of vertices that preserve adjacency. Here, $0$ is distinguished as the unique vertex of degree $3$, so it must be fixed, while $1$ and $2$ can be swapped. Vertex $3$ is connected only to $0$, so it must also be fixed. Therefore,
\[
\operatorname{Aut}(G) \;\cong\; \mathbb{Z}_2.
\]
Similarly, any simplicial automorphism of $F(G)$ must fix $0$ and $3$ and may swap $1$ and $2$, so we also have
\[
\operatorname{Aut}(F(G)) \;\cong\; \mathbb{Z}_2.
\]
Thus, this example illustrates that $\operatorname{Aut}(G) \;\cong\; \operatorname{Aut}(F(G))$.}
\end{example}

Let $X$ be a set of points in a metric space $(M,d)$. The \emph{Vietoris-Rips complex} at scale $\varepsilon>0$, denoted by $\mathcal{R}_\varepsilon(X)$, is the simplicial complex with vertex set $X$ such that a finite subset $\sigma \subseteq X$ spans a simplex if and only if
\[
d(x,y) \le \varepsilon \quad \text{for all } x,y \in \sigma.
\]
We can also define a graph associated with $X$ at scale $\varepsilon>0$, called the \emph{proximity graph} $G_\varepsilon(X)$, as follows. The vertex set is $X$, and two distinct vertices $x,y \in X$ are connected by an edge if and only if
\[
d(x,y) \le \varepsilon.
\]

Equivalently, $G_\varepsilon(X)$ is the $1$-skeleton of the Vietoris-Rips complex $\mathcal{R}_\varepsilon(X)$.

\begin{proposition}\label{proposition:equation}
Let $X$ be a set of points in a metric space $(M,d)$ and $\varepsilon>0$.  Then we have
\[
\mathcal{R}_\varepsilon(X) = F(G_\varepsilon(X)).
\]
\end{proposition}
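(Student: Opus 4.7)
The plan is to prove the equality of simplicial complexes by unwinding definitions, since both sides are simplicial complexes on the same vertex set $X$ and the membership conditions are manifestly the same up to rewording. Concretely, I would verify that a finite subset $\sigma \subseteq X$ lies in $\mathcal{R}_\varepsilon(X)$ if and only if it lies in $F(G_\varepsilon(X))$.

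First I would note that both complexes have the same vertex set $X$, so it suffices to compare their simplices. Second, I would fix a finite subset $\sigma = \{x_0, \dots, x_k\} \subseteq X$ and run through the following chain of equivalences: $\sigma$ is a simplex of $\mathcal{R}_\varepsilon(X)$ iff $d(x_i, x_j) \le \varepsilon$ for every pair $x_i, x_j \in \sigma$ (by definition of the Vietoris-Rips complex); iff every pair $\{x_i, x_j\}$ with $i \ne j$ is an edge of $G_\varepsilon(X)$ (by definition of the proximity graph); iff $\sigma$ spans a clique in $G_\varepsilon(X)$; iff $\sigma$ is a simplex of $F(G_\varepsilon(X))$ (by definition of the flag complex). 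Each step is purely a restatement of the relevant definition recalled immediately before the proposition.

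For tidiness I would handle the degenerate cases $|\sigma| \le 1$ separately with a one-line remark: a singleton is a $0$-simplex in both complexes because its vertex lies in $X$, and the empty pairwise-distance condition is vacuously satisfied. Likewise, an edge $\{x,y\}$ in $\mathcal{R}_\varepsilon(X)$ corresponds precisely to an edge in $G_\varepsilon(X)$, which justifies the earlier remark that $G_\varepsilon(X)$ is the $1$-skeleton of $\mathcal{R}_\varepsilon(X)$.

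There is essentially no obstacle here: the statement is a definition-chase, and the only mild care needed is to record the ``iff'' direction explicitly at each step so that the equality of complexes (not just inclusion) is established. I would therefore expect the proof to fit in a short paragraph, followed by the immediate corollary that, via Theorem~\ref{theorem:isomorphism}, $\operatorname{Aut}(\mathcal{R}_\varepsilon(X)) \cong \operatorname{Aut}(G_\varepsilon(X))$, which is the structural fact needed for the subsequent development of persistent automorphism modules.
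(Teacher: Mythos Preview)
Your proposal is correct and follows essentially the same approach as the paper: both argue that a finite $\sigma\subseteq X$ is a simplex of $\mathcal{R}_\varepsilon(X)$ iff all pairwise distances are at most $\varepsilon$, iff $\sigma$ is a clique in $G_\varepsilon(X)$, iff $\sigma$ is a simplex of $F(G_\varepsilon(X))$. Your version is slightly more explicit about the degenerate cases and the $1$-skeleton remark, but the substance is identical.
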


\begin{proof}
Recall that the flag complex $F(G_\varepsilon(X))$ is the simplicial complex whose simplices are exactly the cliques in $G_\varepsilon(X)$. By definition of $G_\varepsilon(X)$, a set of vertices $\sigma \subseteq X$ forms a clique if and only if $d(x,y)\le \varepsilon$ for all $x,y \in \sigma$.

On the other hand, $\sigma$ spans a simplex in the Vietoris-Rips complex $\mathcal{R}_\varepsilon(X)$ if and only if $d(x,y)\le \varepsilon$ for all $x,y \in \sigma$. Therefore, the simplices of $\mathcal{R}_\varepsilon(X)$ are precisely the cliques of $G_\varepsilon(X)$, which proves that $\mathcal{R}_\varepsilon(X) = F(G_\varepsilon(X))$.
\end{proof}

\begin{proposition}
Let $X$ be a set of points in a metric space $(M,d)$ and $\varepsilon>0$. Then we have
\[
\operatorname{Aut}(G_\varepsilon(X)) \;\cong\; \operatorname{Aut}(\mathcal{R}_\varepsilon(X)).
\]
\end{proposition}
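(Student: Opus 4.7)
The proof plan is essentially a one-line chaining of the two results established immediately above, so the strategy is to present it as a direct corollary rather than redo any combinatorial work. The observation is that both $G_\varepsilon(X)$ and $\mathcal{R}_\varepsilon(X)$ are already linked by Proposition \ref{proposition:equation}, and the flag complex construction was already shown to induce an automorphism-group isomorphism in Theorem \ref{theorem:isomorphism}.

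Concretely, I would first invoke Proposition \ref{proposition:equation} to rewrite $\mathcal{R}_\varepsilon(X)$ as $F(G_\varepsilon(X))$, which reduces the problem to comparing $\operatorname{Aut}(G_\varepsilon(X))$ with $\operatorname{Aut}(F(G_\varepsilon(X)))$. Then I would apply Theorem \ref{theorem:isomorphism} to the finite simple graph $G = G_\varepsilon(X)$, yielding the natural isomorphism $\operatorname{Aut}(G_\varepsilon(X)) \cong \operatorname{Aut}(F(G_\varepsilon(X)))$. Substituting back gives the desired isomorphism $\operatorname{Aut}(G_\varepsilon(X)) \cong \operatorname{Aut}(\mathcal{R}_\varepsilon(X))$.

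There is essentially no obstacle here, since the two ingredients have already done all the conceptual work: Theorem \ref{theorem:isomorphism} supplied the functorial bijection between graph automorphisms and flag-complex automorphisms (the key content being that a flag complex is reconstructible from its $1$-skeleton), while Proposition \ref{proposition:equation} identified the Vietoris--Rips complex with the flag complex of its proximity graph. The only minor point worth mentioning explicitly is that $G_\varepsilon(X)$ is a finite simple graph whenever $X$ is finite (which is the setting implicitly assumed so that $\operatorname{Aut}$ is well-defined as a permutation group on $X$), so the hypotheses of Theorem \ref{theorem:isomorphism} are met. Under the standing assumption that $X$ is finite, the proof reduces to the single chain of equalities and isomorphisms
\[
\operatorname{Aut}(\mathcal{R}_\varepsilon(X)) = \operatorname{Aut}(F(G_\varepsilon(X))) \cong \operatorname{Aut}(G_\varepsilon(X)),
\]
and the naturality of this isomorphism is inherited from that of $\Phi$ constructed in the proof of Theorem \ref{theorem:isomorphism}, which will be useful later when building the functor into the module category.
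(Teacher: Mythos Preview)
Your proposal is correct and follows exactly the same approach as the paper: the paper's proof is the single sentence ``This result follows directly from Theorem \ref{theorem:isomorphism} and Proposition \ref{proposition:equation},'' which is precisely the chaining you describe. Your additional remarks on finiteness and naturality are reasonable elaborations but not required for the argument.
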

\begin{proof}
This result follows directly from Theorem \ref{theorem:isomorphism} and Proposition \ref{proposition:equation}.
\end{proof}

\section{Persistent automorphisms}\label{section:persistence}

Introducing multi-scale information provides an effective approach to more accurately capturing the symmetry of data. This naturally leads to the study of persistent automorphism groups of spaces. Given a point cloud, we can construct the Vietoris-Rips complex. As demonstrated in the previous section, the automorphism group of this Rips complex is isomorphic to the automorphism group of the proximity graph of the point cloud. This equivalence allows us to reformulate the problem of determining the automorphism group of a space in terms of the automorphism group of a graph. This section is devoted to the study of automorphism groups of graphs.

\subsection{Functorial construction of automorphism groups}

We aim to construct a functor
\[
  \operatorname{Aut}: \mathbf{Graph} \to \mathbf{Group}
\]
from the category of graphs to the category of groups. However, this is not a natural construction, since graph homomorphisms do not necessarily induce homomorphisms between automorphism groups in a natural way. When a graph homomorphism \( f: H \to G \) is an isomorphism, we obtain a map
\[
  \tilde{f}: \operatorname{Aut}(H) \to \operatorname{Aut}(G), \quad \phi \mapsto f \circ \phi \circ f^{-1}.
\]
However, when \( f: H \to G \) is merely a graph homomorphism, it is challenging to provide a corresponding morphism between the automorphism groups.

\begin{example}
{\rm Let \( G = P_{3} \) be the path graph with vertex set \( \{1, 2, 3\} \) and edges \( \{1, 2\} \) and \( \{2, 3\} \). Therefore, we can obtain the automorphism group
\[
\operatorname{Aut}(G) \cong \mathbb{Z}/2,
\]
which is generated by the reflection swapping \( 1 \leftrightarrow 3 \) and \( 2 \leftrightarrow 2 \).

Let \( H \) be the subgraph of \( G \) induced by the vertex set \( \{1, 2\} \). Hence, \( H \) is the single edge \( \{1, 2\} \), and
\[
\operatorname{Aut}(H) \cong \mathbb{Z}/2.
\]
Consider the inclusion map
\[
i : H \hookrightarrow G.
\]

If there were a group homomorphism
\[
\operatorname{Aut}(i) : \operatorname{Aut}(H) \longrightarrow \operatorname{Aut}(G),
\]
then the non-trivial generator of \( \operatorname{Aut}(H) \), which swaps 1 and 2, could only be mapped to the trivial element in \( \operatorname{Aut}(G) \). This is because the automorphism of \( H \) that swaps 1 and 2 cannot be extended to a non-trivial automorphism of \( G \) via the inclusion map \( i \), as no non-trivial automorphism of \( G \) exists that fixes 2 while swapping 1 and 3.}
\end{example}

From now on, \( \mathbb{F} \) is assumed to be a field. We introduce the following subcategory \( \mathbf{Graph}^{\ast} \) of the category of graphs. The objects of \( \mathbf{Graph}^{\ast} \), denoted \( \text{obj}(\mathbf{Graph}^{\ast}) \), are all graphs. The morphisms are given by inclusions of graphs. For each graph \( G \), the identity map \( \text{id}_G \) is the identity morphism in \( \text{Hom}(G, G) \).

Let \( \mathbf{Vec}_\mathbb{F} \) denote the category of vector spaces over \( \mathbb{F} \). We define a contravariant functor
\[
\Phi: \mathbf{Graph}^{\ast} \to \mathbf{Vec}_\mathbb{F}.
\]
For each graph \( G \), we set
\[
\Phi(G) = \mathbb{F} \text{Aut}(G),
\]
where \( \mathbb{F} \text{Aut}(G) \) denotes the \( \mathbb{F} \)-linear space generated by the set \( \text{Aut}(G) \), the automorphism group of \( G \). If \( f: G_1 \hookrightarrow G_2 \), then \( \Phi(f): \mathbb{F} \text{Aut}(G_2) \to \mathbb{F} \text{Aut}(G_1) \) is given by
\[
\Phi(f)(\eta) =
\begin{cases}
\text{the restriction } \eta|_{G_1}, & \text{if } \eta|_G \in \text{Aut}(G) \text{ for any } G_1 \subseteq G \subseteq G_2, \\
0, & \text{otherwise}.
\end{cases}
\]

\begin{example}
{\rm Let $G_1$ be the 2-vertex graph with edge $v_1v_2$, and $G_2$ be the 3-vertex cycle with vertices $v_1,v_2,v_3$ and edges $v_1v_2,v_2v_3,v_3v_1$. Consider the embedding $f: G_1 \hookrightarrow G_2$
given by
\[
f(v_1)=v_1, \quad f(v_2)=v_2.
\]
The map
\[
\Phi(f): \mathbb{F}\aut(G_2) \to \mathbb{F}\aut(G_1)
\]
sends each $\eta \in \aut(G_2)$ to its restriction $\eta|_{G_1}$ if it is a valid automorphism of $G_1$, and to $0$ otherwise.

The automorphism groups are
\[
\aut(G_1) = \{\mathrm{id}, (v_1v_2)\}, \qquad
\aut(G_2) = \{ \mathrm{id}, (v_1v_2), (v_2v_3), (v_1v_3), (v_1v_2v_3), (v_1v_3v_2) \}.
\]
Then the map $\Phi(f)$ acts on the elements of $\aut(G_2)$ in Table \ref{table:elements}.
\begin{table}[htbp]
  \centering
\begin{tabular}{c|c|c}
$\eta \in \aut(G_2)$ & $\eta|_{G_1}$ & $\Phi(f)(\eta)$ \\
\hline
id & $v_1 \mapsto v_1, v_2 \mapsto v_2$ & id \\
$(v_1v_2)$ & $v_1 \mapsto v_2, v_2 \mapsto v_1$ & $(v_1v_2)$ \\
$(v_2v_3)$ & $v_1 \mapsto v_1, v_2 \mapsto v_3$ & 0 \\
$(v_1v_3)$ & $v_1 \mapsto v_3, v_2 \mapsto v_2$ & 0 \\
$(v_1v_2v_3)$ & $v_1 \mapsto v_2, v_2 \mapsto v_3$ & 0 \\
$(v_1v_3v_2)$ & $v_1 \mapsto v_3, v_2 \mapsto v_1$ & 0 \\
\end{tabular}
  \caption{Action of the restriction map $\Phi(f)$ on the elements of $\text{Aut}(G_2)$.}\label{table:elements}
\end{table}
Hence, the map $\Phi(f)$ restricts $S_3 = \aut(G_2)$ to its subgroup $\aut(G_1) \cong \mathbb{Z}/2$, sending all other elements to $0$.}
\end{example}

\begin{lemma}\label{composition}
Let \( f: G_1 \to G_2 \) and \( g: G_2 \to G_3 \) be morphisms in the category \( \mathbf{Graph}^{\ast} \). Then, we have
\[
\Phi(g \circ f) = \Phi(f) \circ \Phi(g).
\]
\end{lemma}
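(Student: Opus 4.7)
The plan is to verify the identity on basis elements. Since both $\Phi(g\circ f)$ and $\Phi(f)\circ\Phi(g)$ are $\mathbb{F}$-linear maps from $\mathbb{F}\aut(G_3)$ to $\mathbb{F}\aut(G_1)$, it suffices to fix an arbitrary $\eta\in\aut(G_3)$ and show both sides agree. By unpacking the piecewise definition, I will introduce three conditions: $(C)$: $\eta|_G\in\aut(G)$ for every $G$ with $G_1\subseteq G\subseteq G_3$; $(A)$: the same for $G_2\subseteq G\subseteq G_3$; and $(B)$: the same for $G_1\subseteq G\subseteq G_2$. Then $\Phi(g\circ f)(\eta)$ equals $\eta|_{G_1}$ if $(C)$ holds and $0$ otherwise; while $\Phi(f)\bigl(\Phi(g)(\eta)\bigr)$ equals $\eta|_{G_1}$ when both $(A)$ and $(B)$ hold (noting that for any $G\subseteq G_2$ one has $(\eta|_{G_2})|_G=\eta|_G$), and $0$ otherwise. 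Thus the lemma reduces to the equivalence $(C)\Longleftrightarrow (A)\wedge(B)$.

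The implication $(C)\Rightarrow (A)\wedge(B)$ is immediate because any $G$ appearing in $(A)$ or $(B)$ also lies in the range of $(C)$. For the converse, I will assume $(A)$ and $(B)$ and take an arbitrary $G$ with $G_1\subseteq G\subseteq G_3$. The key device is to compare $G$ with the two auxiliary graphs $G\cap G_2$ and $G\cup G_2$ (defined vertex- and edge-wise), which satisfy $G_1\subseteq G\cap G_2\subseteq G_2$ and $G_2\subseteq G\cup G_2\subseteq G_3$. Applying $(B)$ to $G\cap G_2$ and $(A)$ to $G\cup G_2$ and to $G_2$ itself yields the set-theoretic invariances $\eta(V(G)\cap V(G_2))=V(G)\cap V(G_2)$, $\eta(V(G)\cup V(G_2))=V(G)\cup V(G_2)$, $\eta(V(G_2))=V(G_2)$, and the analogous three for edges.

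From these invariances and the bijectivity of $\eta$ on $V(G_3)$, I will derive $\eta(V(G))=V(G)$ by decomposing $V(G)\cup V(G_2)$ as the disjoint union of $V(G)\setminus V(G_2)$, $V(G_2)\setminus V(G)$, and $V(G)\cap V(G_2)$ and peeling off invariant pieces using complements; the identity $\eta(V(G))=V(G)$ then follows, and the same argument verbatim gives $\eta(E(G))=E(G)$. Together these yield $\eta|_G\in\aut(G)$, so $(C)$ holds. The only real obstacle is this last set-theoretic bookkeeping: one must use injectivity of $\eta$ carefully to transport invariance across set differences, but no deep ingredient is required.
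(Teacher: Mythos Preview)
Your argument is correct, and the overall strategy---check equality on basis vectors $\eta\in\aut(G_3)$ by unfolding the piecewise definition---is the same as the paper's. The difference lies in how the nontrivial implication $(A)\wedge(B)\Rightarrow(C)$ is handled. The paper's proof simply unfolds $\Phi(f)$ and $\Phi(g)$ in turn and asserts that the resulting condition matches that of $\Phi(g\circ f)$, without isolating or justifying why being an automorphism of every intermediate graph between $G_2$ and $G_3$ \emph{and} every intermediate graph between $G_1$ and $G_2$ forces the same for every intermediate graph between $G_1$ and $G_3$ (and its presentation contains some index confusion). You supply this missing step explicitly via the auxiliary graphs $G\cap G_2$ and $G\cup G_2$ and the set-theoretic invariance/complement argument using injectivity of $\eta$. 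This is exactly the content needed to close the gap; the paper's version effectively takes it for granted. So your route is not genuinely different in spirit, but it is more complete on the one point that actually requires work.
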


\begin{proof}
Let \( \eta \in \aut(G_3) \). By the definition of the functor \( \Phi \), we have
\[
\Phi(f)(\eta) =
\begin{cases}
\text{the restriction } \eta|_{G_1}, & \text{if } \eta|_G \in \aut(G) \text{ for all } G_1 \subseteq G \subseteq G_2, \\
0, & \text{otherwise}.
\end{cases}
\]
Next, apply \( \Phi(g) \) to \( \Phi(f)(\eta) \). By the definition of \( \Phi(g) \), we get
\[
\Phi(g)(\Phi(f)(\eta)) =
\begin{cases}
\text{the restriction } (\eta|_{G_1})|_{G_2}, & \text{if } (\eta|_{G_1})|_G \in \aut(G) \text{ for all } G_2 \subseteq G \subseteq G_3, \\
0, & \text{otherwise}.
\end{cases}
\]
Since \( \eta \) satisfies the condition for \( \Phi(f) \), it follows that \( \eta|_G \in \aut(G) \) for all \( G_3 \subseteq G \subseteq G_1 \). Therefore, we conclude that
\[
\Phi(g)(\Phi(f)(\eta)) =
\begin{cases}
\text{the restriction } \eta|_{G_2}, & \text{if } \eta|_G \in \aut(G) \text{ for all } G_2 \subseteq G \subseteq G_3, \\
0, & \text{otherwise}.
\end{cases}
\]
This is exactly the definition of \( \Phi(g \circ f) \). Thus, we have \( \Phi(g \circ f) = \Phi(f) \circ \Phi(g) \), which completes the proof of the lemma.
\end{proof}

\begin{proposition}
The construction \( \Phi: \mathbf{Graph}^{\ast} \to \mathbf{Vec}_{\mathbb{F}} \) is a contravariant functor.
\end{proposition}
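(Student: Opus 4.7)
The plan is to verify the four defining properties of a contravariant functor: $\Phi$ sends objects to objects in $\mathbf{Vec}_{\mathbb{F}}$, sends morphisms to linear maps in the reverse direction, preserves identities, and reverses composition. The fourth property is exactly Lemma \ref{composition}, which has already been established, so the task reduces to checking the first three; only the second requires a small but substantive observation.

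First, for each graph $G$ the object $\Phi(G) = \mathbb{F}\operatorname{Aut}(G)$ is by definition the free $\mathbb{F}$-vector space on the finite set $\operatorname{Aut}(G)$, so it lies in $\mathbf{Vec}_{\mathbb{F}}$ automatically. Second, for an inclusion $f : G_1 \hookrightarrow G_2$, the rule specifying $\Phi(f)$ is prescribed on the basis $\operatorname{Aut}(G_2)$ by the displayed case definition and then extended $\mathbb{F}$-linearly; linearity and well-definedness on the full vector space are immediate from the universal property of the free vector space. The point deserving explicit verification is that the image of each basis element genuinely lies in $\mathbb{F}\operatorname{Aut}(G_1)$: when the non-zero branch is taken, the governing hypothesis ``$\eta|_G \in \operatorname{Aut}(G)$ for every $G_1 \subseteq G \subseteq G_2$'' applied to $G = G_1$ itself yields $\eta|_{G_1} \in \operatorname{Aut}(G_1)$, so $\Phi(f)(\eta)$ is indeed a basis vector of the codomain (or $0$).

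Third, for the identity axiom, observe that the only subgraph sandwiched between $G$ and $G$ is $G$ itself, and $\eta|_G = \eta \in \operatorname{Aut}(G)$ holds trivially for every $\eta \in \operatorname{Aut}(G)$; hence $\Phi(\operatorname{id}_G)(\eta) = \eta$ on every basis element, and linear extension gives $\Phi(\operatorname{id}_G) = \operatorname{id}_{\mathbb{F}\operatorname{Aut}(G)}$. Fourth, the contravariant composition law $\Phi(g \circ f) = \Phi(f) \circ \Phi(g)$ is precisely the content of Lemma \ref{composition}. Combining these four checks proves the proposition.

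The only genuine, albeit minor, obstacle is the bookkeeping in the second step: one must interpret the case distinction correctly, recognising that the hypothesis defining the non-zero branch is a universal quantification over all intermediate subgraphs (including $G_1$ itself), which is precisely what is needed to guarantee that the output lives in the target vector space. Once this is recorded, the remaining verifications reduce to direct checks on basis elements followed by an appeal to Lemma \ref{composition}.
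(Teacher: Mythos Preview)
Your proof is correct and follows essentially the same route as the paper: the identity axiom is checked directly by noting that the only intermediate subgraph between $G$ and itself is $G$, and the contravariant composition law is delegated to Lemma~\ref{composition}. Your treatment is in fact slightly more careful than the paper's, since you explicitly verify that $\Phi(f)(\eta)$ lands in $\mathbb{F}\operatorname{Aut}(G_1)$ by specialising the intermediate-subgraph condition to $G=G_1$, a point the paper leaves implicit.
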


\begin{proof}
By Lemma \ref{composition}, we have established that \( \Phi \) preserves the composition of morphisms in the opposite direction.

Next, we show that \( \Phi \) preserves identity morphisms. This follows directly from the definition of \( \Phi \). Indeed, for any \( \eta \in \aut(G) \), the condition \( \eta|_H = \eta \in \aut(H) = \aut(G) \) for any \( G \subseteq H \subseteq G \) automatically holds. Therefore, we have
\[
\Phi(\mathrm{id}_G): \mathbb{F} \aut(G) \to \mathbb{F} \aut(G),
\]
and for every \( \eta \in \aut(G) \), we have \( \Phi(\mathrm{id}_G)(\eta) = \eta \). This shows that
\[
\Phi(\mathrm{id}_G) = \mathrm{id}_{\mathbb{F} \aut(G)}.
\]
Thus, \( \Phi \) preserves identity morphisms. Since \( \Phi \) preserves both compositions and identities, we conclude that \( \Phi \) is a contravariant functor.
\end{proof}

\subsection{Persistent automorphism module}

\begin{Definition}
{\rm A \emph{persistence graph} \( \mathcal{G} = \{ G_i \}_{i \geq 0} \) in the category \( \mathbf{Graph}^{\ast} \) is a family of graphs, \( \{ G_i \}_{i \geq 0} \), together with graph maps \( f_i : G_i \to G_{i+1} \) in the category \( \mathbf{Graph}^{\ast} \), for each \( i \geq 0 \).}
\end{Definition}

Given a persistence graph \( \mathcal{G} = \{ G_i \}_{i \geq 0} \) in \( \mathbf{Graph}^{\ast} \), we have the following diagram, which represents a filtration of graphs
\[
  G_0 \xhookrightarrow{f_0} G_1 \xhookrightarrow{f_1} G_2 \xhookrightarrow{f_2} \cdots.
\]
\begin{Example}
{\rm Let \( G = (V, E) \) be a weighted graph with a weight function \( w: E \to \mathbb{R} \) that assigns a real number to each edge. For any real number \( a \), we define a subgraph \( G_a = (V, E_a) \), where the edge set \( E_a \) is given by:
\[
  E_a = \{ e \in E \mid w(e) \leq a \}.
\]
Thus, for a sequence of real numbers \( a_1\leq a_2\leq \cdots\leq a_k\leq\cdots \), we obtain a family of graphs \( \{ G_{a_i} \}_{i \geq 0} \), which forms a persistence graph. Specifically, we get the following filtration of graphs
\[
  G_{a_1} \xhookrightarrow{f_1} G_{a_2} \xhookrightarrow{f_2} G_{a_3} \xhookrightarrow{f_3} \cdots
\]
where \( f_i: G_{a_i} \to G_{a_{i+1}} \) is the natural inclusion map, given by
\[
  f_i: G_{a_i} \hookrightarrow G_{a_{i+1}}, \quad \text{for} \quad a_i \leq a_{i+1}.
\]

This construction represents a filtration of graphs, where the graph \( G_{a_i} \) evolves as the threshold \( a \) increases. Intuitively, as the parameter \( a \) increases, more edges are included in the graph, reflecting a growing connectivity structure.}

\end{Example}

\begin{Definition}
{\rm Given a persistence graph $\mathcal{G}=\{G_i, f_i\}$, for $p\geq 0$, the \emph{$p$-persistent automorphism module of $G_i$}, denoted $$\mathbb{F}{\rm Aut}(G_i)^{i, i+p},$$ is the image of the induced homomorphism $$\Phi(f_{i+p})\circ\cdots\circ\Phi(f_i): \mathbb{F}{\rm Aut}(G_{i+p})\rightarrow \mathbb{F}{\rm Aut}(G_{i}).$$}
\end{Definition}

\begin{Definition}
{\rm  A \emph{persistence module} $\mathcal{M}$ is a family of linear spaces $\{M_i\}_{i\geq 0}$, together with linear maps $\varphi_i: M_i\rightarrow M_{i+1}$.

Dually, a family of linear spaces $\{M_i\}_{i\geq 0}$, together with linear maps $\varphi_i: M_{i+1}\rightarrow M_{i}$ is a \emph{(co)persistence module}.}
\end{Definition}

For example, if $\mathcal{G}=\{G_i, f_i\}$ is a persistence graph, then
 \begin{equation}\label{persistence module}
\{\mathbb{F}\aut(G_i), \Phi(f_i)\}
\end{equation} is a persistence module.

\begin{Definition}
{\rm A persistence graph $\mathcal{G}=\{G_i, f_i\}$ is of \emph{finite type} if each component graph $G_i$ is a finite graph, and if the graph maps $f_i$ are graph isomorphisms for $i\geq m$ for some integer $m$.
	
A persistence module  $\mathcal{M}=\{M_i, \varphi_i\}$ is of \emph{finite type} if each component $\mathbb{F}$-linear space $M_i$ is a finitely generated $\mathbb{F}$-linear space, and if the maps $\varphi_i$ are isomorphisms for $i\geq m$ for some integer $m$.}
\end{Definition}

Let $\mathcal{M}=\{M_i, \varphi_i\}_{i\geq 0}$ be a persistence module.
We assign to $\mathcal{M}$ a graded module over the graded ring $\mathbb{F}[x]$ as follows.
Let $$\alpha(\mathcal{M})=\bigoplus_{i=0}^{\infty}M_i$$
be the direct sum of the structures on the individual components. Place a graded $\mathbb{F}[x]$-module structure on $\alpha(\mathcal{M})$ with $x$ acting as a shift map. More precisely,
$$x\cdot (m_0, m_1,\cdots)=(0, \varphi_0(m_0), \varphi_1(m_1),\cdots).$$
It is known that the assignment $\alpha$ defines an equivalence of categories between the category of persistence modules of finite type over $\mathbb{F}$ and the category of finitely generated nonnegatively graded $\mathbb{F}[x]$-modules.

The graded ring $\mathbb{F}[x]$ is a PID and its only possible graded ideals are of the form $(x^n)=x^n\cdot\mathbb{F}[x], n\geq 0.$ Then the classification of $\mathbb{F}[x]$-modules follows from the structure theorem for PID's. This implies the following theorem.

\begin{Theorem}\label{structure}
Let $\mathcal{G}=\{G_i, f_i\}$ be a persistence graph of finite type. Then for
the persistence module in (\ref{persistence module}),
\begin{equation}\label{iso}
\bigoplus_{i=0}\mathbb{F}\aut(G_i)\cong \left(\bigoplus_{i=0}\Sigma^{t_i}\mathbb{F}[x]\right)\oplus\left(\bigoplus_{j=0}\Sigma^{r_j}\left(\mathbb{F}[x]/(x^{s_j})\right)\right),
\end{equation}
where $\Sigma^d$ denotes an $d$-shift upward in grading.
\end{Theorem}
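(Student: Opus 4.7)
The plan is to reduce Theorem \ref{structure} to the classical structure theorem for finitely generated graded modules over the PID $\mathbb{F}[x]$, routed through the equivalence of categories $\alpha$ recalled immediately before the statement. There are essentially three things to check: that the persistence module $\{\mathbb{F}\aut(G_i), \Phi(f_i)\}$ is of finite type in the sense defined above; that the graded $\mathbb{F}[x]$-module $\alpha(\mathcal{M})$ produced by the equivalence is finitely generated; and that the graded structure theorem for $\mathbb{F}[x]$ yields precisely the decomposition displayed in (\ref{iso}).

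For the first point, I would note that each $G_i$ is a finite graph, so $\aut(G_i)$ is a finite group and $\mathbb{F}\aut(G_i)$ is a finite-dimensional $\mathbb{F}$-vector space. In $\mathbf{Graph}^{\ast}$ the morphisms are inclusions, so an $f_i$ that happens to be a graph isomorphism must in fact be the identity on both the vertex and edge sets; hence for $i \geq m$ we have $G_i = G_{i+1}$ and $f_i = \mathrm{id}$, so $\Phi(f_i) = \mathrm{id}_{\mathbb{F}\aut(G_i)}$. This establishes finite type. For the second point, applying $\alpha$ gives the graded $\mathbb{F}[x]$-module $\bigoplus_{i\geq 0}\mathbb{F}\aut(G_i)$, with $x$ acting as the shift induced by $\Phi$; a finite generating set is supplied by bases of the finitely many pieces $\mathbb{F}\aut(G_0),\dots,\mathbb{F}\aut(G_m)$, since above degree $m$ the shift is an isomorphism onto the next graded piece. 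For the third point, the graded ideals of $\mathbb{F}[x]$ are exactly the $(x^n)$ (because any homogeneous generator of an ideal in $\mathbb{F}[x]$ is a scalar multiple of a power of $x$), so the graded PID structure theorem decomposes $\alpha(\mathcal{M})$ as a direct sum of shifted free cyclic pieces $\Sigma^{t_i}\mathbb{F}[x]$ and shifted torsion cyclic pieces $\Sigma^{r_j}\bigl(\mathbb{F}[x]/(x^{s_j})\bigr)$. Reading this isomorphism back through $\alpha$ on the left-hand side recovers (\ref{iso}).

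The substantive content lies almost entirely in two standard citations: the Zomorodian--Carlsson equivalence $\alpha$ and the graded structure theorem for finitely generated $\mathbb{F}[x]$-modules. The only paper-specific work is the finite-type check for the persistence module, and in particular the small compatibility observation that within $\mathbf{Graph}^{\ast}$ any $f_i$ which is both a morphism (an inclusion) and a graph isomorphism must be an identity, so that $\Phi(f_i)$ automatically becomes an isomorphism once the underlying graph filtration stabilizes. I expect this tidying-up between the two different notions of ``isomorphism'' at play, together with a clean invocation of the equivalence $\alpha$, to be the only point at which the argument needs to be written out carefully rather than merely asserted.
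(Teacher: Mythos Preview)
Your proposal is correct and follows exactly the route the paper takes: the paper does not give a formal proof, but in the paragraph immediately preceding the theorem it asserts that $\alpha$ is an equivalence onto finitely generated nonnegatively graded $\mathbb{F}[x]$-modules, notes that the graded ideals of $\mathbb{F}[x]$ are the $(x^n)$, and says ``This implies the following theorem.'' Your write-up simply fleshes out this sketch, and your observation that in $\mathbf{Graph}^{\ast}$ an inclusion which is also a graph isomorphism must be the identity (so the induced $\Phi(f_i)$ are identities for $i\geq m$) is the only nontrivial verification needed and is handled correctly.
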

This classification theorem has a natural interpretation. The free portion of (\ref{iso}) are in bijective correspondence with those automorphism group generators which come into existence at parameter $t_i$ and which are still alive for all future parameter values.
The torsion elements correspond to those automorphism group generators that appear at parameter $r_j$ and vanish at parameter $r_j+s_j$.

Before proceeding any further, we parametrize the isomorphism classes of $\mathbb{F}[x]$-modules by suitable intervals.
\begin{Definition}
{\rm A \emph{$\mathcal{P}$-interval} is an ordered pair $(i, j)$ with $0\leq i<j,$ where $i, j\in\mathbb{Z}\cup\{+\infty\}.$}
\end{Definition}
We associate a graded $\mathbb{F}[x]$-module to a set  $\mathcal{S}$ of $\mathcal{P}$-intervals via a correspondence $Q$ given in the following way. Let
\begin{align*}
&Q(i, j)=\Sigma^{i}\left(\mathbb{F}[x]/(x^{j-i})\right),\\
&Q(i, +\infty)=\Sigma^{i}\mathbb{F}[x].
\end{align*}
For a set of $\mathcal{P}$-intervals $\mathcal{S}=\{(i_1, j_1),\cdots, (i_m, j_m)\}$, define $$Q(\mathcal{S})=\bigoplus^m_{t=0}Q(i_t, j_t).$$
\begin{Definition}
{\rm A finite set of $\mathcal{P}$-intervals is called a \emph{barcode}.}
\end{Definition}

One can easily see that the correspondence $Q$ defines a bijection between the finitely generated graded $\mathbb{F}[x]$-modules and the barcodes. With this correspondence,
Theorem \ref{structure} yields the fundamental characterization of barcodes.
\begin{Theorem}
The order of  $\aut(G_i)^{i, i+p}$ is equal to the number of intervals in the barcode of $\bigoplus_{i=0}\mathbb{F}\aut(G_i)$ spanning the interval $[i, i+p]$. In particular, the order of $\aut(G_i)$ is equal to the number of intervals containing $i$.
\end{Theorem}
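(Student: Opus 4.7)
The plan is to reduce the theorem to a degreewise rank computation on interval modules via the structure theorem. First I would invoke Theorem \ref{structure} to fix an isomorphism
\[
\alpha(\mathcal{M}) \;\cong\; \bigoplus_k Q(a_k, b_k),
\]
where $\mathcal{M} = \{\mathbb{F}\aut(G_i), \Phi(f_i)\}$ is the persistence module in (\ref{persistence module}) and $\{(a_k, b_k)\}$ is its barcode (with $b_k \in \mathbb{Z} \cup \{+\infty\}$). Because this isomorphism respects the graded $x$-action and the composite defining $\mathbb{F}\aut(G_i)^{i,i+p}$ is, summand by summand, exactly the $p$-fold $x$-multiplication between degrees $i$ and $i+p$, and because images commute with finite direct sums, one obtains
\[
\dim_{\mathbb{F}} \mathbb{F}\aut(G_i)^{i,i+p} \;=\; \sum_k \rank\bigl( x^p : Q(a_k,b_k)_i \to Q(a_k,b_k)_{i+p} \bigr).
\]

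Next I would evaluate this rank on each type of summand. For a free summand $Q(a, +\infty) = \Sigma^a \mathbb{F}[x]$, the degree-$j$ part is one-dimensional for $j \geq a$ and zero otherwise, and $x^p$ is an isomorphism between any two nonzero degrees; hence the rank is $1$ iff $a \leq i$, i.e., iff the interval $[a, +\infty)$ contains both $i$ and $i+p$. For a torsion summand $Q(a, b) = \Sigma^a(\mathbb{F}[x]/(x^{b-a}))$, the degree-$j$ part is one-dimensional for $a \leq j < b$ and zero otherwise, with $x^p$ vanishing whenever the domain or codomain degree leaves $[a, b)$; hence the rank is $1$ iff $a \leq i$ and $i+p < b$, i.e., iff the interval $[a, b)$ contains both $i$ and $i+p$. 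In either case a summand contributes $1$ precisely when its $\mathcal{P}$-interval spans $[i, i+p]$, and summing over $k$ yields the first statement. The \emph{in particular} part is then the specialization $p = 0$, since an interval spans $[i, i]$ iff it contains $i$.

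The main obstacle is a bookkeeping issue about the direction of the arrows: the functor $\Phi$ is contravariant, so the maps in (\ref{persistence module}) literally go $\mathbb{F}\aut(G_{j+1}) \to \mathbb{F}\aut(G_j)$, whereas the construction of $\alpha$ and Theorem \ref{structure} are phrased for modules whose structure maps go $M_j \to M_{j+1}$. Before the per-summand rank calculation can be quoted, one therefore has to either reindex the filtration on its finite-type tail or work with the dual copersistence version of the classification, and verify that the composite appearing in the definition of $\mathbb{F}\aut(G_i)^{i,i+p}$ coincides, under the identification with $\bigoplus_k Q(a_k,b_k)$, with multiplication by $x^p$. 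Once this is settled, the per-summand argument above is elementary and gives the barcode interpretation in the stated form.
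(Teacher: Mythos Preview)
The paper does not supply an explicit proof of this theorem; it is stated immediately after Theorem \ref{structure} and the description of the correspondence $Q$, with only the sentence ``With this correspondence, Theorem \ref{structure} yields the fundamental characterization of barcodes'' as justification. Your proposal is correct and is precisely the standard argument that fleshes out this one-line claim: decompose the module via the structure theorem, compute the rank of the $p$-fold structure map on each interval summand $Q(a,b)$, and observe that the contribution is $1$ exactly when the interval contains both $i$ and $i+p$. So your approach and the paper's implicit approach coincide.

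Your flag about the direction of the arrows is well taken and is in fact a point the paper glosses over. The paper introduces both persistence modules and ``(co)persistence modules'' in the same definition, then immediately declares $\{\mathbb{F}\aut(G_i),\Phi(f_i)\}$ to be ``a persistence module'' and applies the $\mathbb{F}[x]$-module classification to it, without commenting on the fact that $\Phi(f_i)$ runs from $\mathbb{F}\aut(G_{i+1})$ to $\mathbb{F}\aut(G_i)$. Your proposed remedies---reindexing on the finite-type tail, or invoking the dual classification for copersistence modules---are exactly what is needed to make the statement literal, and once that bookkeeping is in place the per-summand rank calculation is unaffected. In short, you have both reproduced the intended argument and identified a genuine expository gap in the paper.
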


Barcodes are a very intuitive way of representing the evolution of the automorphism groups.
Using barcodes, one can provide a visual description of the evolution of the automorphism groups in a filtration of a finite graph, as illustrated in the following example.

\begin{Example}\label{example:graph_filtration}
{\rm Let $G=C_4$, the cycle of length $4$. Choose a labelling of the $4$ vertices, as shown in Figure \ref{graph1}. Consider a filtration of $C_4$ given by
\[
G_0 \xhookrightarrow{f_0} G_1 \xhookrightarrow{f_1} G_2 \xhookrightarrow{f_2} G_{\geq 3}=G.
\]

where
\begin{align*}
&G_0=\{1, 2, 3, 4\},\\
&G_1=\{1, 2, 3, 4,  \{2, 3\}\},\\
&G_2=\{1, 2, 3, 4, \{1, 4\}, \{2, 3\}\},
\end{align*}
and
$f_i: G_i\rightarrow G_{i+1}$ is the natural inclusion map.
 Then we obtain  a persistence graph $\mathcal{G}=\{G_i, f_i\}$.
\begin{figure}[htbp]
\centering
  \includegraphics[width=0.8\textwidth]{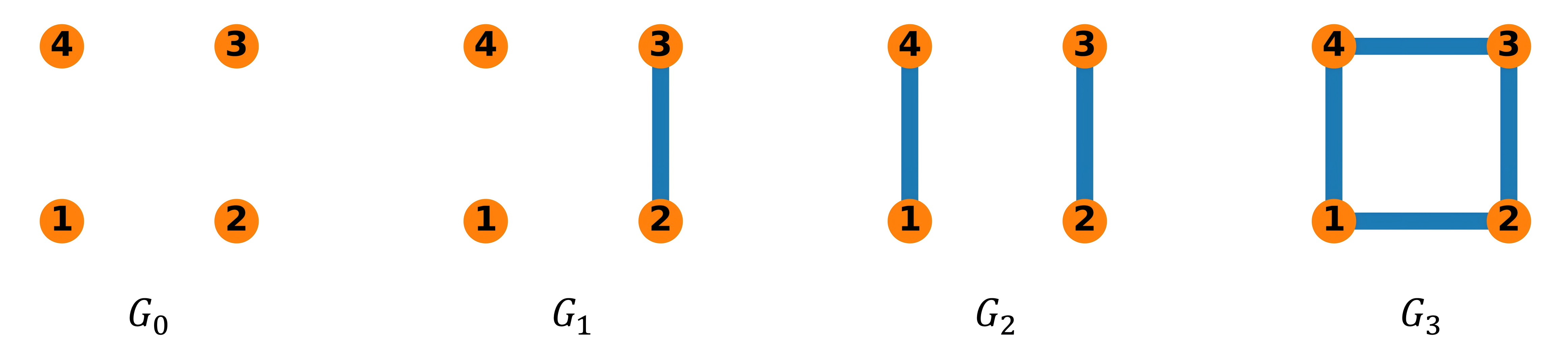}
\caption{Illustration of the filtration of graphs in Example \ref{example:graph_filtration}. }
	\label{graph1}
\end{figure}
It is known that the automorphism group of $C_4$ is the dihedral group $D_8$ of order $8$.
Let $r$ be the rotation clockwise through $\frac{\pi}{2}$ radian and
let $s$ be the reflection about the line of symmetry through vertex $1$ and vertex $3$.
Then $$D_8=\{1, r, r^2, r^3, s, sr, sr^2, sr^3\}.$$
A straightforward calculation yields
\begin{align*}
&\mathbb{F}{\rm Aut}(G_0)^{0, 0}=\mathbb{F}S_4,\\
&\mathbb{F}{\rm Aut}(G_0)^{0, 1}=\mathbb{F}\{1,  (14), (23), sr=(14)(23)\},\\
&\mathbb{F}{\rm Aut}(G_0)^{0, 2}=\mathbb{F}\{1,  (14), (23), sr=(14)(23) \},\\
&\mathbb{F}{\rm Aut}(G_0)^{0, 3}=\mathbb{F}\{1  \},\\
&\mathbb{F}{\rm Aut}(G_1)^{1, 1}=\mathbb{F}\{1,  (14), (23), sr=(14)(23)\},\\
&\mathbb{F}{\rm Aut}(G_1)^{1, 2}=\mathbb{F}\{1,  (14), (23), sr=(14)(23)\},\\
&\mathbb{F}{\rm Aut}(G_1)^{1, 3}=\mathbb{F}\{1 \},\\
&\mathbb{F}{\rm Aut}(G_2)^{2, 2}=\mathbb{F}\{1,  (14), (23), sr=(14)(23), r^2=(13)(24), sr^3=(12)(34)\},\\
&\mathbb{F}{\rm Aut}(G_2)^{2, 3}=\mathbb{F}\{1,  sr^3=(12)(34)\},\\
&\mathbb{F}{\rm Aut}(G_3)^{3, 3}=\mathbb{F}D_8,\\
\end{align*}

Thus the corresponding barcode is $\mathcal{S}=$
\begin{align*}
&\{\underbrace{[0, 1),\cdots, [0, 1)}_{20 \text{ terms}},~[0, +\infty),~[0, 3),~ [0, 3),~[0, 3)\}\\
\cup&\{[1, +\infty), ~ [1, 3), ~ [1, 3), ~ [1, 3)\}\\
\cup&\{[2, +\infty),~[2, +\infty), ~ [2, 3), ~ [2, 3), ~ [2, 3)~ [2, 3)\}\\
\cup&\{\underbrace{[3, +\infty),\cdots, [3, +\infty)}_{8 \text{ terms}}\}.
\end{align*}

Figure \ref{graph2} gives a graphical representation of the barcode $\mathcal{S}$ as a collection of horizontal line segments in a plane whose horizontal axis corresponds to the parameter and whose vertical axis represents an ordering of the generators of automorphism groups.}

\begin{figure}[htbp]
	\centering
	\includegraphics[scale=0.75]{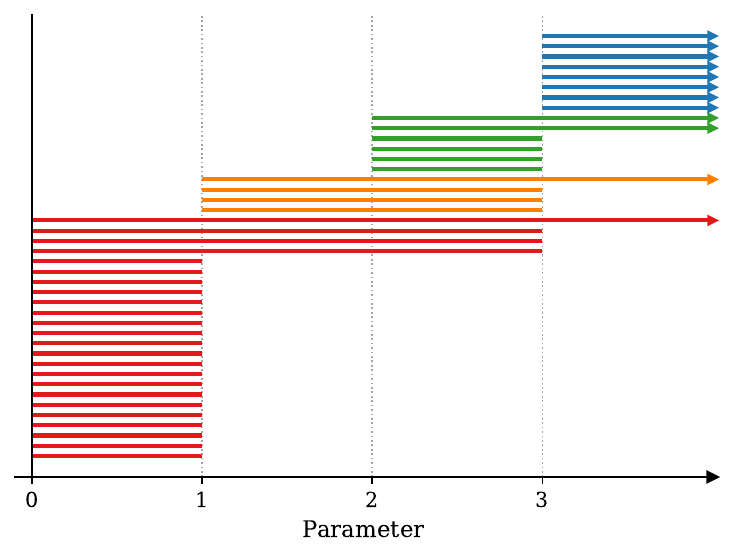}
	\caption{The barcode for persistence module $\{\mathbb{F}{\rm Aut}(G_i), \Phi(f_i)\}_{i\geq 0}$. }
	\label{graph2}
\end{figure}
\end{Example}

\subsection{Two criteria}

Recall that in the definition of persistent automorphism module, the key ingredient is that when $G^{\prime}=(V(G^{\prime}), E(G^{\prime}))$ is a subgraph of $G=(V(G), E(G))$, we need to consider
the homomorphism $$\mathbb{F}{\rm Aut}(G) \rightarrow \mathbb{F}{\rm Aut}(G^{\prime})$$ induced by the restriction.
In other words, we are interested in the automorphisms of $G^{\prime}$  which are the restrictions of automorphisms of $G$.

In what follows, we shall restrict our attention to the case where
the subgraph $G^{\prime}$ is required to have the property that
\begin{equation}\label{subgraph}
V(G^{\prime})=V(G),~ E(G^{\prime})\subseteq E(G).
\end{equation}
This assumption imposed on the subgraph $G^{\prime}$ is not just an idle technicality. In fact,
this case arises naturally in practice, which, as we shall explain, validates the assumption we make.

Given a collection of points $X=\{x_{\alpha}\}$ in a metric space, the obvious way to convert $\{x_{\alpha}\}$ into a global object is to use the point cloud as the vertices of a graph whose edges are determined by proximity. One of the most natural methods for doing so is to apply  \emph{Vietoris-Rips} complex $\mathcal{R}_\varepsilon(X)$.  Proposition \ref{proposition:equation} says $$\mathcal{R}_\varepsilon(X) = F(G_\varepsilon(X)).$$
When $0<\varepsilon_1<\varepsilon_2$,
$G_{\varepsilon_1}(X)$ is a subgraph of $G_{\varepsilon_2}(X)$ satisfying (\ref{subgraph}).
This justifies the condition (\ref{subgraph}).

The main goal of this subsection  is to present two necessary conditions concerning whether the automorphisms of the subgraph $G^{\prime}$ are the restrictions of automorphisms of $G$.

Let $G=(V(G), E(G))$ be a finite simple graph. In the remainder of this subsection, the symbols $H_p$ and $H^p$ will denote simplicial homology and simplicial cohomology, respectively. For a deeper exposition of the related algebraic topology, the reader is referred to {\rm{\cite{AH2002,JRM2018,CA2013}}}.

A \emph{cycle} of a graph is a path that starts and ends at the same vertex, with no other repeated vertices, and all edges in the path are distinct.

\begin{Definition}
	{\rm A cycle $P$ of a graph $G$ is called \emph{automorphism-invariant} in $G$ if every automorphism of $G$ maps $P$ to itself, i.e., $\eta(P)=P$ for all $\eta\in{\rm \aut}(G)$.}
\end{Definition}

\begin{Definition}
	{\rm A graph $G$ is called \emph{cycle-stable} if all its cycles are automorphism-invariant.}
\end{Definition}

\begin{Example}{\rm Figure (\ref{fig:exmaple2}) depicts a cycle-stable graph $G$. It is clear that the cycles $(v_2,v_3,v_4)$, $(v_0,v_1,v_2,v_3)$ and $(v_1,v_2,v_5,v_6,v_7)$ are
cycle-stable.
	\begin{figure}[htbp]
		\centering
         \includegraphics[width=0.6\textwidth]{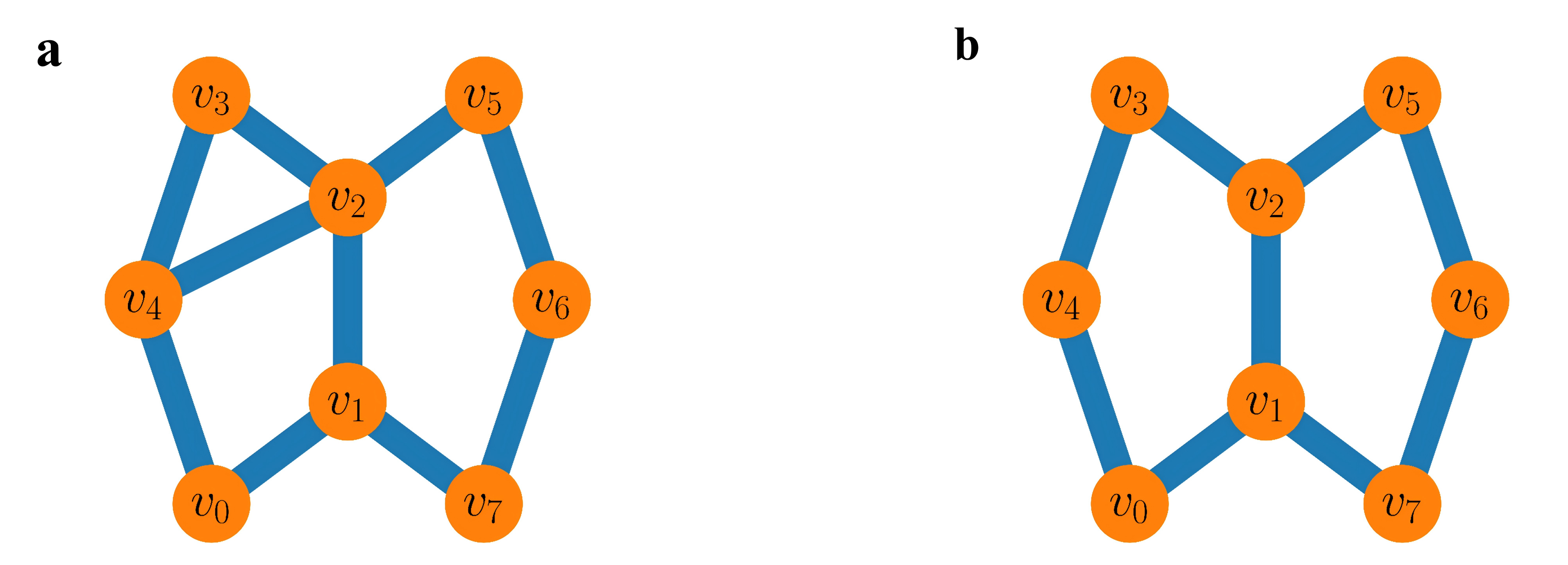}\\
		\caption{\textbf{a} Graph $G$ with three automorphism-invariant cycles; \textbf{b} Graph $G^{\prime}$ no automorphism-invariant cycle.}\label{fig:exmaple2}
	\end{figure}
However, the subgraph $G^{\prime}$ with $$E(G^{\prime})=E(G)-\{v_2v_4\}$$ fails to satisfy the automorphism-invariant property and is not cycle-stable. In fact, assigning to $v_3, v_4, v_0$ to $v_5, v_6, v_7$ respectively and keeping $v_1, v_2$ fixed gives rise to an automorphism of $G^{\prime}$ that sends cycle $(v_0, v_4, v_3,v_2,v_1)$ to cycle $(v_7,v_6,v_5,v_2,v_1)$.

}	

\end{Example}

Let $\omega$ denote the generator of the free cyclic group $H_1(S^1)$ determined by the counter-clockwise orientation of $S^1$. By means of simplicial homology,We establish the first criterion for when automorphisms of $G$ restrict to automorphisms of $G^{\prime}$.

\begin{Theorem}\label{criterion1}
Let $G=(V(G), E(G))$ be a finite simple and cycle-stable graph. Let $G^{\prime}=(V(G^{\prime}), E(G^{\prime}))$ be a subgraph of $G$ such that $$V(G^{\prime})=V(G),~ E(G^{\prime})\subseteq E(G).$$
Let $\eta\in {\rm Aut}(G)$.  If
$\eta$ restricts to an automorphism ${\eta|_{G^{\prime}}}: G^{\prime}\rightarrow G^{\prime}$, then for any given map $f: S^1\rightarrow |G^{\prime}|$,
$$(|\eta|_{G^{\prime}}|\circ f)_*(\omega)=\pm(f_*\omega)$$ in $H_1(|G^{\prime}|)$, where $G^{\prime}$ is viewed as a $1$-dimensional simplicial complex.
\end{Theorem}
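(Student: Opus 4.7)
The plan is to reduce the assertion to a computation on simple-cycle generators of $H_1(|G'|)$ and then exploit the cycle-stability of $G$. Since $|G'|$ is a $1$-dimensional simplicial complex, $H_1(|G'|)$ is free abelian with a basis given by the fundamental cycles associated to the non-tree edges of any chosen spanning tree of $G'$. A continuous $f: S^1 \to |G'|$ is, by cellular approximation, homotopic to an edge-path loop, so $f_*(\omega)$ decomposes as an integer combination of such simple-cycle classes. It therefore suffices to understand the action of $|\eta|_{G'}|$ on each cycle generator $[C]$.

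The crucial step invokes cycle-stability. Every simple cycle $C$ of $G'$ is also a simple cycle of $G$, because $G'$ is a subgraph of $G$ with the same vertex set. By cycle-stability of $G$, $\eta(C)=C$ as a subgraph of $G$; and since $\eta$ restricts to an automorphism of $G'$ by hypothesis, the same equality holds inside $G'$. Passing to topological realizations, $|\eta|_{G'}|$ restricts to a simplicial self-homeomorphism of the circle $|C| \cong S^1$. Any self-homeomorphism of $S^1$ has degree $\pm 1$, and hence acts as multiplication by $\pm 1$ on $H_1(|C|) \cong \mathbb{Z}$. This yields $(|\eta|_{G'}|)_*[C] = \pm [C]$ for every simple-cycle generator $[C]$ of $H_1(|G'|)$.

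Combining this with the naturality of the pushforward, for any $f$ whose class $f_*\omega$ is concentrated on a single cycle generator we immediately obtain
\[
(|\eta|_{G'}| \circ f)_*(\omega) \;=\; (|\eta|_{G'}|)_*(f_*\omega) \;=\; \pm (f_*\omega).
\]
The hard part will be the case where $f_*\omega$ is a genuine $\mathbb{Z}$-combination of several distinct cycle generators on which $|\eta|_{G'}|$ could a priori act with different signs; to factor a single $\pm$ out of the whole sum, all these signs must coincide. Establishing this is the main obstacle, and I expect it to require squeezing the full strength of cycle-stability together with the hypothesis that $\eta$ already restricts to $G'$ (so that non-tree edges cannot be permuted in a sign-mixing way). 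Once this coherence of signs is in hand, the remaining ingredients (cellular approximation, degree theory on $S^1$, and the standard description of $H_1$ of a graph) are elementary and routine.
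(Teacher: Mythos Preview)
Your approach coincides with the paper's: both reduce to the observation that $H_1(|G'|)$ is freely generated by (classes of) simple cycles and that cycle-stability forces $\eta|_{G'}$ to preserve each such cycle setwise, whence $(|\eta|_{G'}|)_*$ acts as $\pm 1$ on every simple-cycle class. The paper's proof stops exactly there; you go further and correctly isolate the residual issue---whether the various $\pm 1$'s cohere into a single global sign when $f_*\omega$ is a genuine combination of several cycle generators.

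That ``hard part'' cannot be completed as you hope: the sign-coherence you want to extract from cycle-stability is in general false. Take $G=G'$ to be a triangle $(v,a,b)$ and a $4$-cycle $(v,c,d,e)$ glued at the vertex $v$. The only simple cycles are the triangle $C_1$ and the square $C_2$, and one checks that every automorphism of $G$ preserves each of them setwise, so $G$ is cycle-stable. The automorphism $\eta=(a\,b)$ reverses $C_1$ and is the identity on $C_2$, so on $H_1(|G|)\cong\mathbb{Z}[C_1]\oplus\mathbb{Z}[C_2]$ it sends $[C_1]\mapsto -[C_1]$ and $[C_2]\mapsto[C_2]$. For a loop $f$ with $f_*\omega=[C_1]+[C_2]$ one gets $(|\eta|\circ f)_*\omega=-[C_1]+[C_2]$, which equals neither $[C_1]+[C_2]$ nor $-[C_1]-[C_2]$. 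Thus the step you single out as the main obstacle would actually fail; the paper's terse argument simply glosses over the same point. The conclusion as written is safe only when $f_*\omega$ is supported on a single simple cycle (which is precisely the situation in the paper's worked example following the two criteria), so both your proposal and the paper's proof establish the theorem only in that restricted sense.
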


\begin{proof}
A key observation is that  $H_1(|G^{\prime}|)$ is a free abelian group generated by all cycles of $G^{\prime}$, since the complex $G^{\prime}$ is $1$-dimensional.
According to the hypothesis that $G$ is cycle-stable, we obtain
$$\eta|_{G^{\prime}}(P)=P$$
 for every cycle $P$ of $G^{\prime}$. The proof of this theorem is finished.
\end{proof}

As we shall see in Theorem \ref{criterion}, our second criterion has a description in terms of simplicial cohomology that is very similar to, and in a certain sense dual to, the
first one. However, the proof of this criterion is much more complicated.
Before proceeding further we need to verify a technical property.
\begin{Lemma}\label{maps to 1}
Any map $f: |F(G)|\rightarrow S^1$ is homotopic to a map which sends $V(G)$ to the point $s_0$ of $S^1$.
\end{Lemma}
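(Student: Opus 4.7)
The plan is to invoke the homotopy extension property (HEP) for the CW pair $(|F(G)|, V(G))$, where $V(G)$ is the $0$-skeleton of $F(G)$. Since the geometric realization of a simplicial complex is a CW complex whose $0$-cells are exactly the vertices, $(|F(G)|, V(G))$ is a CW pair, and CW pairs are known to satisfy HEP.

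First, I would use path-connectedness of $S^1$: for each vertex $v \in V(G)$, choose a path $\gamma_v : [0,1] \to S^1$ with $\gamma_v(0) = f(v)$ and $\gamma_v(1) = s_0$. Assemble these into a single homotopy
\[
h : V(G) \times [0,1] \longrightarrow S^1, \qquad h(v,t) = \gamma_v(t),
\]
which by construction satisfies $h(\cdot, 0) = f|_{V(G)}$ and $h(\cdot, 1) \equiv s_0$.

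Next, I would apply HEP to the inclusion $V(G) \hookrightarrow |F(G)|$: since $h$ agrees with $f|_{V(G)}$ at time $0$, it extends to a homotopy $H : |F(G)| \times [0,1] \to S^1$ with $H(\cdot, 0) = f$ and $H(v,t) = h(v,t)$ for all $v \in V(G)$ and $t \in [0,1]$. The time-$1$ map $H(\cdot, 1)$ is then homotopic to $f$ and sends every vertex of $G$ to $s_0$, which is exactly the desired conclusion.

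I do not expect any real obstacle here: the only thing worth verifying is that $V(G)$ is a subcomplex of the CW structure on $|F(G)|$, which is immediate since the vertices are precisely the $0$-cells in the standard CW decomposition of a geometric realization. No special feature of the flag complex $F(G)$ or of $S^1$ beyond path-connectedness is needed, and one could equally well write the homotopy out by hand using disjoint open stars of the vertices, but the HEP argument keeps the proof short and conceptual.
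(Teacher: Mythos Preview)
Your proof is correct and follows essentially the same approach as the paper's: both construct a homotopy on the $0$-skeleton $V(G)$ moving each $f(v)$ to $s_0$, then extend it to all of $|F(G)|$ via the homotopy extension property for the CW pair $(|F(G)|,V(G))$. The only cosmetic differences are that the paper obtains the homotopy on $V(G)$ by noting that $f|_{V(G)}$ is not surjective (so its image lies in a contractible arc), and it writes out the HEP explicitly via the retraction $|F(G)|\times I \to (|F(G)|\times 0)\cup(V(G)\times I)$ rather than invoking it by name.
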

\begin{proof}
 The restriction $$f|_{V(G)}: V(G)\rightarrow S^1$$ being not surjective implies there exists a homotopy  $$h_t: V(G)\rightarrow S^1, ~0\leq t\leq 1,$$ such that $h_0=f|_{V(G)},$ and $h_1(x)=s_0$ for every $x\in V(G)$.
Consider the product space $M=|F(G)|\times I$ and its closed subspace $L=(|F(G)|\times 0)\cup (V(G)\times I)$. Define a map $H: L\rightarrow S^1$ by setting
\begin{equation*}
H(x, t)=
\begin{cases}
f(x),~&\text{if}~x\in |F(G)|,~ t=0,\\
h_t(x),~&\text{if}~x\in A,~ t\in I.
\end{cases}
\end{equation*}
Since there is a retract $r: M\rightarrow L$, we obtain a homotopy $g_t,~ 0\leq t\leq 1$ by taking $$g_t(x)=H\circ r(x, t)$$ for every $(x, t)\in M$.  $g_t$ is obviously an extension of $h_t$ such that $g_0=f$. Then the map $g_1$ is the desired one  mapping $V(G)$ into $s_0$.
\end{proof}

We now begin the discussion of the second criterion.
Let $\theta$ denote the generator of the free cyclic group $H^1(S^1)$ determined by the counter-clockwise orientation of $S^1$.

\begin{Theorem}\label{criterion}
Let $G=(V(G), E(G))$ be a finite simple graph with all cycles automorphism-invariant
and  $G^{\prime}=(V(G^{\prime}), E(G^{\prime}))$ a subgraph of $G$ such that $$V(G^{\prime})=V(G),~ E(G^{\prime})\subseteq E(G).$$
Let $\eta\in {\rm Aut}(F(G))$.  If
$\eta$ restricts to an automorphism $\eta|_{G^{\prime}}: F(G^{\prime})\rightarrow F(G^{\prime})$, then when given any  map $f: |F(G)|\rightarrow S^1$,
 $$(f\circ|\eta|_{G^{\prime}}|)^*(\theta)(P)=\pm(f|_{|F(G^{\prime})|})^*(\theta)(P)$$ for any cycle $P$ in $H_1(|F(G^{\prime})|)$.
\end{Theorem}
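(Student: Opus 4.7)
The plan is to reduce this cohomological identity to a geometric statement about degrees of self-maps of $S^{1}$, exploiting cycle-stability. The proof will mirror the structure of Theorem \ref{criterion1}, with the Kronecker pairing converting homological information into cohomological information.

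First, I would normalize $f$. By Lemma \ref{maps to 1}, I may replace $f$ by a homotopic map (still denoted $f$) that sends every vertex of $G$ to the basepoint $s_{0}\in S^{1}$. Since homotopic maps induce equal pullbacks on cohomology, both sides of the desired identity are unaffected by this replacement. Under this normalization, for each edge $e$ of $G$, the restriction $f|_{e}$ becomes a loop at $s_{0}$ with a well-defined integer degree, which makes the evaluation of $(f|_{|F(G')|})^{\ast}(\theta)$ on a cycle transparent.

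The core geometric step is to analyze $|\eta|_{G^{\prime}}|$ restricted to a cycle. Given a graph-theoretic cycle $P$ of $G^{\prime}$, its geometric realization $|P|$ is a polygonal subspace of $|F(G^{\prime})|$ homeomorphic to $S^{1}$. Since $G$ is cycle-stable, $\eta(P)=P$ as a set of edges, and so $|\eta|_{G^{\prime}}|$ restricts to a simplicial self-homeomorphism of $|P|\cong S^{1}$. Any simplicial automorphism of a polygon permutes its vertices either by a cyclic rotation (orientation-preserving) or by a dihedral reflection (orientation-reversing), so the restricted map has degree $\pm 1$. Consequently $(|\eta|_{G^{\prime}}|)_{\ast}[P]=\pm[P]$ in $H_{1}(|F(G^{\prime})|)$, where $[P]$ is the homology class represented by the cycle $P$.

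Finally, I would conclude by functoriality. Using $(g\circ h)^{\ast}=h^{\ast}\circ g^{\ast}$ together with the Kronecker pairing, one has
\[
(f\circ|\eta|_{G^{\prime}}|)^{\ast}(\theta)(P)
=(f|_{|F(G^{\prime})|})^{\ast}(\theta)\bigl((|\eta|_{G^{\prime}}|)_{\ast}[P]\bigr)
=\pm(f|_{|F(G^{\prime})|})^{\ast}(\theta)(P),
\]
which is the desired identity. The main obstacle will be the geometric step: one must rule out pathological simplicial self-maps of $|P|$ that are not orientation-preserving rotations or orientation-reversing reflections. This is handled by noting that $\eta|_{G^{\prime}}$ is bijective on vertices and preserves the adjacency structure of the cycle, which forces the induced vertex permutation on $P$ to lie in the dihedral group of $|P|$, hence realize as a degree $\pm 1$ homeomorphism of $S^{1}$.
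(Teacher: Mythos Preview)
Your argument is correct and notably more direct than the paper's. Both proofs begin with the normalization from Lemma~\ref{maps to 1} and both rely ultimately on cycle-stability forcing $\eta|_{G'}$ to act on each cycle $P$ as an element of the dihedral group. The difference is where the computation happens. The paper works at the cochain level: it builds an explicit cochain $c^{1}(\xi)\in C^{1}(|F(G')|)$ by assigning to each edge $\sigma$ the integer $\deg(f\circ|\xi|\circ\phi_{\sigma})$, then spends effort verifying that $c^{1}(\xi)$ is a cocycle (via a degree computation on the boundary of a $2$-simplex) and that it represents $(f\circ|\xi|)^{\ast}(\theta)$; only after this does it compare the edge-degree sums around $P$ for $\xi=\mathrm{id}$ and $\xi=\eta|_{G'}$. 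You bypass this entire construction by invoking naturality of the Kronecker pairing, reducing the identity to the homological statement $(|\eta|_{G'}|)_{\ast}[P]=\pm[P]$, which follows immediately from the dihedral observation. Your route is shorter and makes the normalization step essentially cosmetic (it is not used in your final computation), whereas the paper genuinely needs vertices to land on $s_{0}$ so that edge-restrictions become loops with well-defined degrees. What the paper's approach buys is an explicit cochain-level description that one could in principle compute with; what yours buys is economy and a clearer conceptual picture of why the $\pm$ appears.
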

\begin{proof}
By Lemma \ref{maps to 1}, we may assume that $f$ takes all vertices of $F(G)$ to $s_0$.
For each $f\circ|\xi|$, $\xi\in {\rm Aut}(F(G^{\prime}))$, we shall  construct a cochain $$c^1(\xi)\in C^1(|F(G^{\prime})|)$$ such that $c^1(\xi)$ represents $(f\circ|\xi|)^*(\theta)$ in $H^1(|F(G^{\prime})|).$

Let $\sigma=v_0v_1$ be an arbitrary $1$-simplex in $F(G^{\prime})$. Denote by $\phi_{\sigma}: \Delta_1\rightarrow\sigma$ the linear map sending $0$ to $v_0$ and $1$ to $v_1$. Note that the composition
$$f\circ|\xi|\circ\phi_{\sigma}$$ is a loop in $S^1$. Thus the degree of $f\circ|\xi|\circ\phi_{\sigma}$ is defined. Suppose that $\gamma_1$ is a generator of  $H_1(\Delta_1, \{0, 1\})$ satisfying that $$\sigma=(\phi_{\sigma})_*(\gamma_1).$$ Then $\{\sigma\}$ forms a basis for $H_1(|F(G^{\prime})|^1, |F(G^{\prime})|^0)$ as $\sigma$ ranges over the $1$-simplices of $F(G^{\prime})$, where $\phi_{\sigma}$ is the characteristic map for $\sigma$, $F(G^{\prime})$ being viewed as a CW complex.
Define the cochain $c^1(\xi)\in C^1(|F(G^{\prime})|)$ by
\begin{equation}\label{representative}
c^1(\xi)(\sigma)={\rm deg}(f\circ|\xi|\circ\phi_{\sigma}).
\end{equation}

$c^1(\xi)$ is indeed a cocycle. To prove this assertion, let $\tau=v_0v_1v_2$ be an arbitrary $2$-simplex in $F(G^{\prime})$. Then
$$\partial\tau=\sigma_0-\sigma_1+\sigma_2,$$
where $\sigma_0=v_1v_2,~ \sigma_1=v_0v_2,~ \sigma_2=v_0v_1$.
Let $\phi_{\tau}: \Delta_2\rightarrow\tau$ be the linear homeomorphism which preserves the order of vertices. Then
$$\tau=(\phi_{\tau})_*(\gamma_2),$$
for some generator $\gamma_2$ of $H_2(\Delta_2, \partial\Delta_2)$. It follows that
\begin{align}\label{cocycle}
(\delta c^1(\xi))(\tau)&=c^1(\xi)(\partial\tau)=c^1(\xi)(\sigma_0)-c^1(\xi)(\sigma_1)+c^1(\xi)(\sigma_2)\\
&={\rm deg}(f\circ|\xi|\circ\phi_{\sigma_0})-{\rm deg}(f\circ|\xi|\circ\phi_{\sigma_1})+{\rm deg}(f\circ|\xi|\circ\phi_{\sigma_2}).\nonumber
\end{align}
It remains to show that $(\ref{cocycle})$ is equal to $0$. To see this, consider the restriction $$g=f\circ|\xi|\circ\phi_{\tau}|_{\partial\Delta_2}: \partial\Delta_2\rightarrow S^1.$$ Denote $\partial\Delta_2=v_0^{\prime}v_1^{\prime}v_2^{\prime}$.
Then the generator for $H_1(\partial\Delta_2)$ is $\sigma_0^{\prime}-\sigma^{\prime}_1+\sigma^{\prime}_2$, where $\sigma^{\prime}_0=v^{\prime}_1v^{\prime}_2,~ \sigma^{\prime}_1=v^{\prime}_0v^{\prime}_2,~ \sigma^{\prime}_2=v^{\prime}_0v^{\prime}_1$.
Observe that
$$\sigma^{\prime}_j=(\phi_{\sigma^{\prime}_j})_*(\gamma_1)~\text{and}~ (\phi_{\tau}\circ\phi_{\sigma^{\prime}_j})_*(\gamma_1)=(\phi_{\sigma_j})_*(\gamma_1).$$
Hence \begin{align*}
{\deg}g&=g^*(\theta)(\sigma_0^{\prime}-\sigma^{\prime}_1+\sigma^{\prime}_2)\\
&=\theta(g_*(\sigma_0^{\prime}-\sigma^{\prime}_1+\sigma^{\prime}_2))\\
&={\rm deg}(f\circ|\xi|\circ\phi_{\sigma_0})-{\rm deg}(f\circ|\xi|\circ\phi_{\sigma_1})+{\rm deg}(f\circ|\xi|\circ\phi_{\sigma_2})\\
&=(\ref{cocycle}).
\end{align*}
The fact $g$ is the restriction of $f\circ|\xi|\circ\phi_{\tau}$ which is defined on $\Delta_2$ means that $g$ is homotopic to a constant map. It follows that ${\deg}g=0$, finishing the proof that $c^1(\xi)$ is a cocycle.

We claim that $c^1(\xi)$ is a representative of $(f\circ|\xi|)^*(\theta)$ in $H^1(|F(G^{\prime})|).$ To see this, by direct computation, we have for an arbitrary $1$-simplex $\sigma$ in $F(G^{\prime})$,
\begin{align*}
(f\circ|\xi|)^*(\theta)(\sigma)&=\theta((f\circ|\xi|)_*(\sigma))\\
&=\theta((f\circ|\xi|)_*((\phi_{\sigma})_*(\gamma_1))\\
&={\rm deg}(f\circ|\xi|\circ\phi_{\sigma})\\
&=c^1(\xi)(\sigma).~(\text{by (\ref{representative})})
\end{align*}

 Suppose that
\begin{align*}
P=(v_0, v_1,\cdots, v_m)
\end{align*}
is a cycle in $G^{\prime}$.

A key observation is that  $$\eta|_{G^{\prime}}(P)=P,$$   the cycle $P$ being automorphism-invariant in $G$.
Note that from $(\ref{representative})$, we know that
\begin{align*}\label{well-defined}
&c^1({\rm id}_{G^{\prime}})(v_{0}v_{1})+\cdots+c^1({\rm id}_{G^{\prime}})(v_{{m-1}}v_{{m}})+c^1({\rm id}_{G^{\prime}})(v_{{m}}v_{0})\nonumber\\
=&\pm (c^1(\eta|_{G^{\prime}})(v_{0}v_{1})+\cdots+c^1(\eta|_{G^{\prime}})(v_{{m-1}}v_{{m}})+c^1(\eta|_{G^{\prime}})(v_{{m}}v_{0})),
\end{align*}
since $\eta|_{G^{\prime}}$ and ${\rm id}_{G^{\prime}}$ are the automorphisms of $P$, which induce  permutations of edges of $P$. It follows that
\begin{align*}
c^1({\rm id}_{G^{\prime}})(P)=\pm c^1(\eta|_{G^{\prime}})(P).
\end{align*}
Note that $c^1(\eta|_{G^{\prime}})$ and $c^1({\rm id}_{G^{\prime}})$ represent $(f\circ|\eta|_{G^{\prime}}|)^*(\theta)$ and $(f|_{|F(G^{\prime})|})^*(\theta)$ respectively in $H^1(|F(G^{\prime})|).$
Therefore, we conclude that $(f\circ|\eta|_{G^{\prime}}|)^*(\theta)(P)=\pm(f|_{|F(G^{\prime})|})^*(\theta)(P)$.

\end{proof}

\begin{Example}
{\rm Consider the graph $G$  described in Figure \ref{figure:example2}. Let $G^{\prime}$ be the subgraph given by $$E(G^{\prime})=E(G)-\{v_1v_2, v_1v_3\}.$$
\begin{figure}[htbp]
	\centering
         \includegraphics[width=0.6\textwidth]{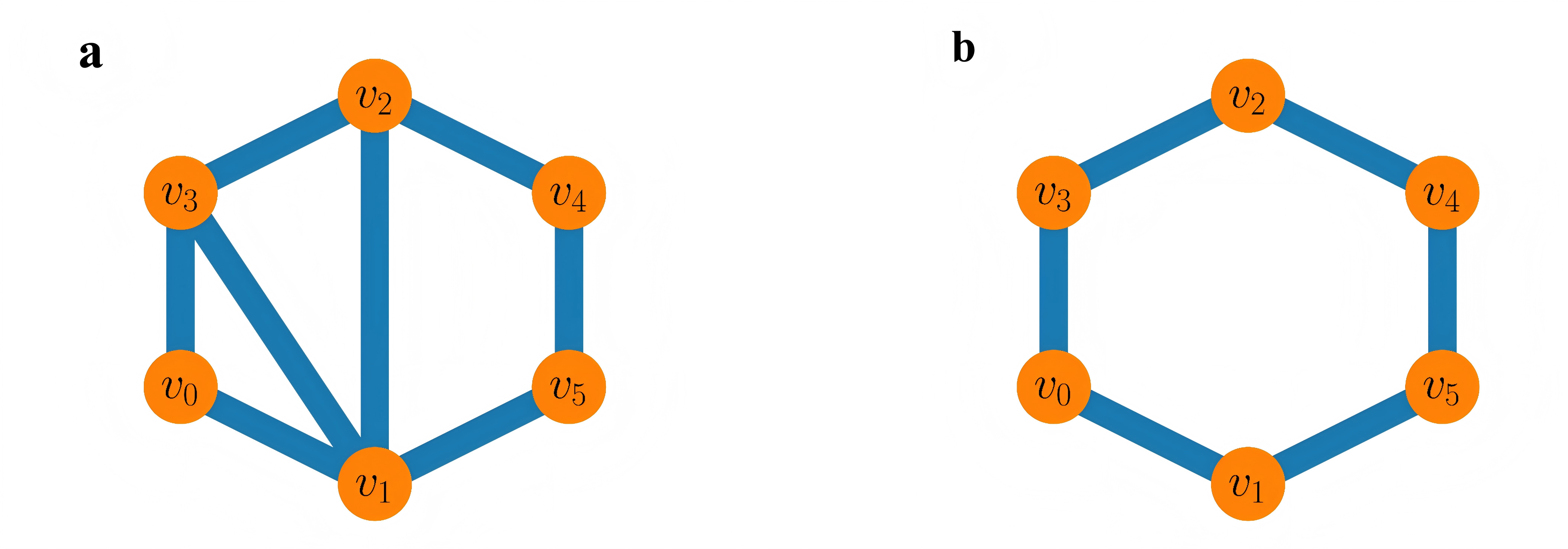}\\
   \caption{\textbf{a} Graph $G$ which is cycle-stable;  \textbf{b} Graph $G'$ with only one cycle.}\label{figure:example2}
\end{figure}
Identify $S^1$ with the cycle $(v_0, v_5, v_4, v_3)$. Let $\eta\in {\rm Aut}(G)$ be the permutation defined by $(v_0v_5)(v_3v_4)$. Then clearly $\eta|_{G^{\prime}}$ is an automorphism of $G^{\prime}$.
Consider the map
$f: S^1\rightarrow |G^{\prime}|$ given by the inclusion. Then a direct computation gives
$$(|\eta|_{G^{\prime}}|\circ f)_*(\omega)=-(f)_*(\omega)$$ in $H_1(|G^{\prime}|)$.

The same conclusion can also be derived from Theorem \ref{criterion}. To see this, let us consider the map $g: |F(G)|\rightarrow S^1$ defined by the linear map determined by mapping
$v_i$ to $v_i$ for $i=0,3,4,5$, $v_1$ to $v_0$, and $v_2$ to $v_3$. Let $P$ be the unique  cycle $(v_0,v_1,v_5,v_4,v_2,v_3)$.
An easy computation shows that
$$(g\circ|\eta|_{G^{\prime}}|)^*(\theta)(P)=-(g|_{|F(G^{\prime})|})^*(\theta)(P).$$
}
\end{Example}

\section{Structure and stability analysis of fullerene molecules}\label{section:application}

As a key technique for quantifying multiscale topological features such as connectivity and voids, persistent homology has found extensive use in studying the architecture of biological molecules, see references {\rm{\cite{GC2009,GC2005,HE2010,HE2002,RG2008,AZ2005}}}.
 In this subsection, the theory and algorithms of persistent automorphism modules are employed to study the structure  of fullerene $\mathrm{C}_{60}$.

\subsection{Structure analysis of  fullerene $\mathrm{C}_{60}$}

Recall that the order of the automorphism group $\aut(G)$ of $G$ serves as a quantitative indicator of the symmetry of a graph $G$, a large order implying $G$ is highly symmetric.
Given the ground-state structural data $X$ of a fullerene molecule, which contains coordinates of fullerene carbon atoms, the collection of atom center locations forms a point cloud in  Euclidean space $\mathbb{R}^3$. For each $\varepsilon>0$, we have the corresponding proximity graph $G_\varepsilon(X)$. When the distance threshold $\varepsilon$ is very small, smaller than the shortest $\mathrm{C}$-$\mathrm{C}$ bond length, the resulting graph $G_\varepsilon(X)$ is  disconnected, and its automorphism group order  reflects trivial vertex permutations.

As $\varepsilon$ increases to  certain critical bond lengths, many edges are formed within a short interval. This causes abrupt changes in local connectivity, breaking previously symmetric neighborhoods of some carbon atoms. Consequently, the order of automorphism group  drops sharply. When $\varepsilon$ increases further, more edges appear and the overall connectivity becomes more regular, restoring or even enhancing the global symmetry of this molecule. This leads to a rapid increase in the order of $\aut(G_\varepsilon(X))$.

Before proceeding to a further discussion, we first give some notions that will be used in the sequel.
\begin{Definition}
	{\rm Let $G$ be a simple and connected graph. The \emph{symmetry degree} of $G$ is defined to be
		\begin{equation*}
		\Gamma(G) = \sum_{\sigma\in\aut(G)}{\rm ord}(\sigma).
		\end{equation*}
		If $G$ is not connected, then the symmetry degree of $G$ is defined by
		\begin{equation*}
		\Gamma(G)=\sum_{i=1}^n\Gamma(G_i),
		\end{equation*}
		where $G_1,\cdots, G_n$ are path-components of $G$.}
\end{Definition}

\begin{Definition}
	{\rm Let $X$ be a set of points in Euclidean space $\mathbb{R}^3$. The \emph{symmetry order curve} of $X$ is the function
		$$\delta(\varepsilon)=\log_2(|\aut(G_\varepsilon(X))|),$$
		where $|{\aut}(G_\varepsilon(X))|$ denote the order of the automorphism group of $G_\varepsilon(X)$ at the distance $\varepsilon$.
	}
\end{Definition}
In a similar manner, we introduce the notion of the symmetry degree curve.
\begin{Definition}
	{\rm Let $X$ be a set of points in Euclidean space $\mathbb{R}^3$. The \emph{symmetry degree curve} of $X$ is the function
		$$\gamma(\varepsilon)=\log_2(\Gamma(G_\varepsilon(X))),$$
		where $\Gamma(G_\varepsilon(X))$ is the symmetry degree of $G_\varepsilon(X)$ at the distance $\varepsilon$.
	}
\end{Definition}

We do not consider the symmetry degree of isolated points in the calculations, as studying the symmetry of isolated points is of limited significance. This simplification does not affect the characterization of symmetry.

We will use fullerene $\mathrm{C}_{60}$ as an example to provide more explicit demonstration of the previous discussion. For a given sampling points of fullerene $\mathrm{C}_{60}$, Figure \ref{graph3} shows symmetry order curve and symmetry degree curve at distance parameter $0<\varepsilon\leq 3$, where from top to bottom, the behaviors of $\delta(\varepsilon)$ and  $\gamma(\varepsilon)$ are depicted. In this figure, we find that the behavior of $\delta(\varepsilon)$ exhibits significant variations approximately at the points 1.4, 2.2, 2.4 and 2.7.

\begin{figure}[htbp]
	\centering
	\includegraphics[width=0.9\textwidth]{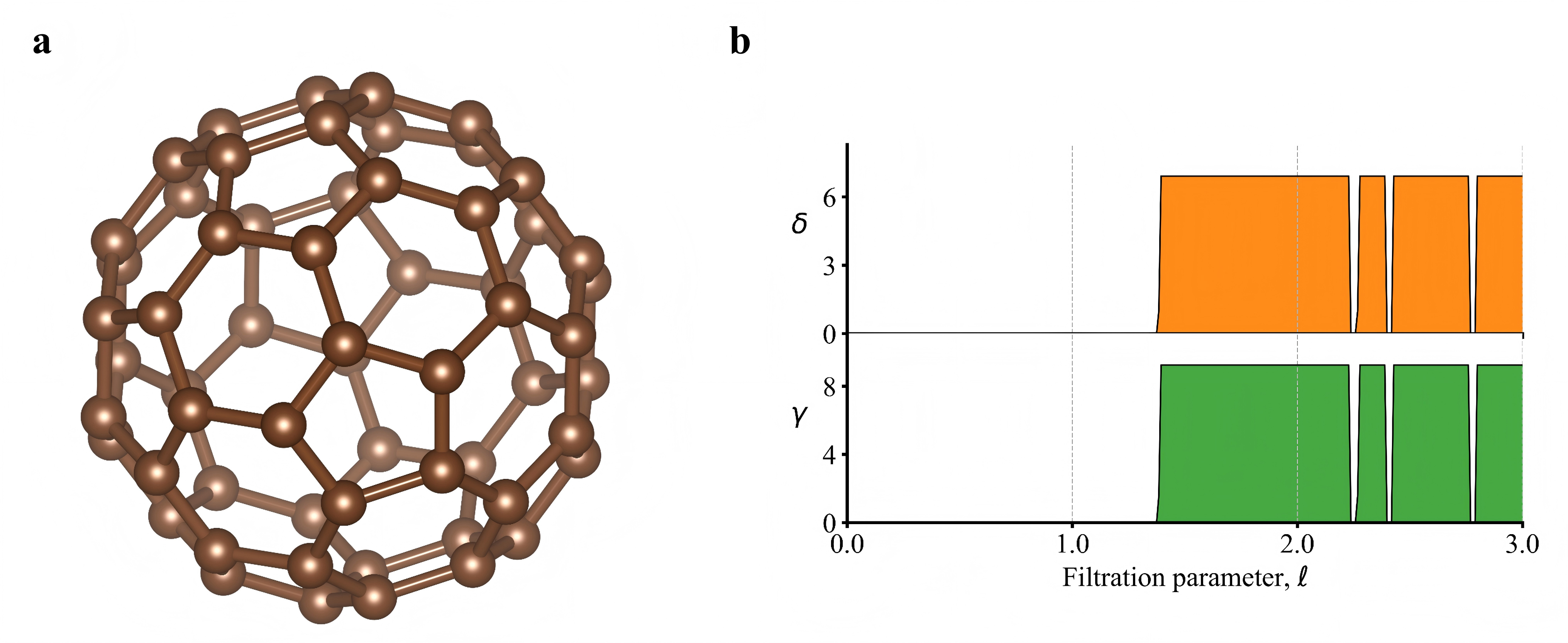}
	\caption{\textbf{a} Illustration of the structure of fullerene $\mathrm{C}_{60}$; \textbf{b} The symmetry order curve and the symmetry degree curve for fullerene $\mathrm{C}_{60}$.}
	\label{graph3}
\end{figure}

 To elucidate the underlying mechanism, let us analyze the carbon atom connectivity in fullerene $\mathrm{C}_{60}$ at different length scales as follows.
 \begin{itemize}
 	\item Nearest neighbors (chemical bonds): $1.40 \mathrm{\AA}-1.46 \mathrm{\AA}$.
 	\item Second nearest neighbors: $2.2 \mathrm{\AA}-2.5 \mathrm{\AA}$.
 	\item Third nearest neighbors: $2.7 \mathrm{\AA}-2.9 \mathrm{\AA}$.
 \end{itemize}
With the threshold $\varepsilon$ increasing, the order of automorphism group undergoes multiple drop-rise intervals, which are
$$(1.4 \mathrm{\AA}, 1.45 \mathrm{\AA}),\quad (2.2 \mathrm{\AA}, 2.45 \mathrm{\AA}),\quad (2.7 \mathrm{\AA}, 2.75 \mathrm{\AA}).$$
For the first drop-rise interval $(1.4 \mathrm{\AA}, 1.45 \mathrm{\AA})$,  all nearest-neighbor bonds are established at this stage, but second nearest neighbors remain largely unconnected. This uneven local connectivity temporarily disrupts local symmetry, resulting in a sharp decrease in the automorphism group order. As some second nearest neighbors are progressively incorporated, local symmetry is restored, causing the order to rise. For the second drop-rise interval $(2.2 \mathrm{\AA}, 2.5 \mathrm{\AA})$, the inclusion of additional second nearest neighbors occurs while some third nearest neighbors are still absent. The imbalance in local connectivity temporarily breaks symmetry, leading to another decline in group order. Once the threshold reaches about $2.5 \mathrm{\AA}$, adjacency among all second nearest neighbors is complete, and the automorphism group order increases again. For the third drop-rise interval $(2.7 \mathrm{\AA}, 2.9 \mathrm{\AA})$, partial connections with third nearest neighbors create nonuniform local neighborhoods, reducing symmetry and lowering the group order. As the threshold further increases, the graph achieves full connectivity, restoring the overall symmetry and raising the order once more.

In summary, each drop-rise interval corresponds to the stepwise incorporation of neighbors at a specific distance scale, temporarily disrupting local symmetry. The multiple intervals appearing in fullerene $\mathrm{C}_{60}$ reflect its large size and complex geometric structure, where local and global symmetries emerge at different $\varepsilon$.

As we have seen, the interpretation of this phenomenon can be provided through the lens of persistent automorphism module, as it captures the multiscale evolution of topological features and reveals how local symmetries are gradually disrupted and subsequently restored with the threshold increasing.

Since the hexagon is a fundamental building block of fullerene $\mathrm{C}_{60}$, we consider a model in which the hexagon is represented as a graph and subjected to a scale-dependent filtration, where edges are included progressively according to an increasing adjacency threshold. This allows us to more directly and intuitively reveal the underlying structural mechanisms.
\begin{figure}[htbp]
	\centering
	\includegraphics[width=0.8\textwidth]{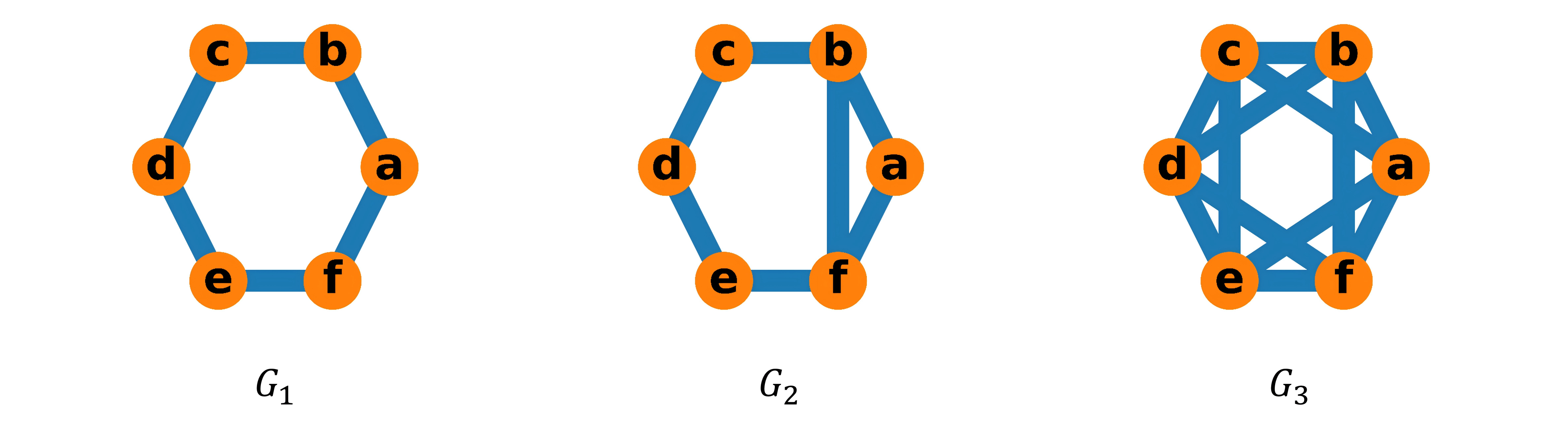}
	\caption{Illustration of a persistence graph of the hexagon.}
	\label{graph4}
\end{figure}

Figure \ref{graph4} illustrates a persistence graph $$\mathcal{G}=\{G_i, f_i\}_{i=1}^3,$$
where $f_i$ is the constant map for each $i$. Consider the map $$\aut(G_2) \rightarrow \aut(G_1)$$ induced by $G_1 \to G_2$. Note that $G_1$ has enough automorphisms, and all the automorphisms of $G_2$ can persist from $\aut(G_2)$ to $\aut(G_1)$. In comparison to $G_3$, the graph $G_2$ is relatively asymmetric. Consequently, many automorphisms of $G_3$, such as the nontrivial rotations, cannot persist from $\aut(G_3)$ to $\aut(G_2)$. This leads to a decrease in symmetry order or symmetry degree.

\subsection{Stability prediction of small fullerene molecules}
As shown by the preceding analysis of the structure of $\mathrm{C}_{60}$, it can be seen that detailed symmetry structural information of molecules has been encoded by the corresponding persistent automorphism modules. On the other hand, molecular structure plays a decisive role in determining molecular function. This provides the possibility that persistent automorphism modules of graphs can be used to predict the stability of small fullerene molecules. In this section, we consider the information of barcode lengths in the  persistence diagrams.

It has been observed that the ground-state heat of formation of fullerene molecules tends to decrease with an increasing number of atoms. Our study relies on the data sets reported in {\rm{\cite{BL1992}}}. Previous work {\rm{\cite{BLZ1992}}} suggested that the stability of fullerenes is influenced by the ratio between the number of pentagons and the total number of atoms in the molecule. Subsequently, K. Xia and colleagues {\rm{\cite{KX2015}}} applied persistent homology using Vietoris-Rips complexes as a tool to predict fullerene stability. More recently, the study in {\rm{\cite{JL2022}}} employed neighborhood hypergraph to estimate fullerene stability. All of these approaches emphasize the importance of the molecular structure in determining stability.

In the study of fullerene and related molecular graphs, the order of the automorphism group provides a rigorous quantitative measure of structural symmetry. A large automorphism group order corresponds to a high degree of symmetry in the carbon cage, such as the well known icosahedral configuration of $\mathrm{C}_{60}$, whereas smaller orders indicate lower symmetry and more irregular structures. This distinction is not merely combinatorial since it has direct implications for molecular behavior. High symmetry fullerenes tend to exhibit greater structural stability, reduced strain energy, and enhanced electronic degeneracy, while less symmetric structures often correspond to higher reactivity and less favorable energetic profiles.

Therefore, the order of the automorphism group functions as a bridge between geometry and molecular chemistry. It encodes, in a single algebraic invariant, the extent to which the connectivity pattern of carbon atoms admits nontrivial permutations. By comparing the group orders across different fullerene graphs, one can classify molecular isomers, assess their relative symmetries, and predict qualitative trends in stability and physical properties. In this sense, the order of the automorphism group is not only a mathematical invariant but also a structural descriptor of profound significance in the analysis and understanding of fullerene molecules.

As we shall see, persistent automorphism module serves as an alternative description of fullerene structures and are applied in the analysis of their predicted stability.

In this work, we analyze the structure and stability of fullerene molecules using the symmetry order curve and symmetry degree curve based on their atomic coordinates. We hypothesize that the stability of fullerene molecules is not only related to the number of atoms but also strongly correlated with their symmetry. The symmetry order curve and symmetry degree curve represent the quantitative changes in symmetry. By observing Figure \ref{best}, we note that each fullerene molecule exhibits a relatively long and stable segment in both its symmetry order curve and symmetry degree curve. This stable phase indicates that the fullerene maintains a relatively stable symmetric structure. Our preliminary approach is to use the duration of this stable phase to analyze the fullerene structure.

For enhanced computational stability and analytical effectiveness, we approximate the length of this stable segment by considering the non-zero parts before and after the stable phase and calculating an average length. Specifically, we compute the integral area of this region and divide it by the vertical value of the stable part, obtaining the corresponding approximate lengths \( I \) and \( J \) in the symmetry order curve and symmetry degree curve, respectively. Finally, we define the real number
\[
\ell = \frac{{\rm sup} \, I + {\rm sup} \, J}{2},
\]
which will be used to analyze the stability of the fullerene.

Figure \ref{best} displays the symmetry order curves and symmetry degree curves corresponding to fullerene  $\mathrm{C}_{20}, \mathrm{C}_{32}$, C$_{40}$ and $\mathrm{C}_{50}$, respectively.
The orange parts represent the symmetry order curves, while the green parts represent symmetry degree curves. By inspection, we obtain that
$$\ell_{20}\approx 2.216,~ \ell_{32}\approx 2.194, ~\ell_{40}\approx 2.175,  ~  \ell_{52}\approx 2.197.$$
The quantity $\ell$ we introduce exhibits a behavior similar to the average length of the bars representing the 1-dimensional persistent homology of fullerene molecules.
For example, Figure \ref{duibitu} presents the barcode representation of the persistent homology for fullerene $\mathrm{C}_{20}$ in dimension 1. It is seen that the $11$ equal-length bars vanish at around $2.34\mathrm{\AA}$, which is close to $\ell_{20}$. This is not a mere coincidence, but reflects an intrinsic relationship.
Both quantities, albeit from different perspectives, reflect to some extent the temporal dynamics of bond length changes in fullerene molecules.
	\begin{figure}[htbp]
		\centering
		\includegraphics[width=0.9\textwidth]{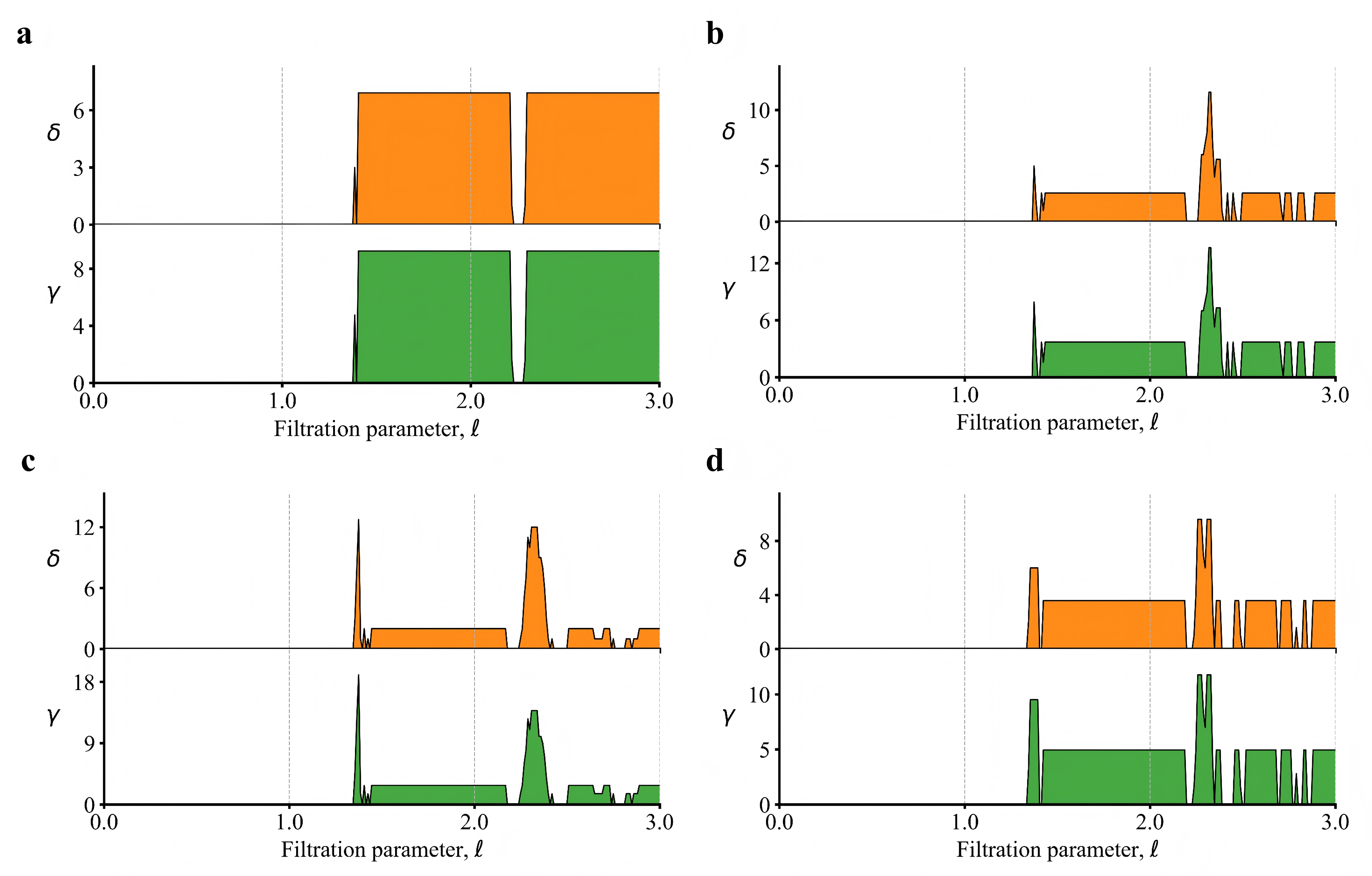}
		\caption{\textbf{a} The symmetry order curve and the symmetry degree curve for $\mathrm{C}_{20}$; \textbf{b} The symmetry order curve and the symmetry degree curve for $\mathrm{C}_{32}$; \textbf{c} The symmetry order curve and the symmetry degree curve for $\mathrm{C}_{40}$; \textbf{d} The symmetry order curve and the symmetry degree curve for $\mathrm{C}_{52}$.}
		\label{best}
	\end{figure}

In light of the above discussion,
in our model, we consider the quotient of $\ell$ and $n$, where $n$ is the number of carbon atoms. More precisely, by setting
$$R=\frac{\ell}{n},$$
we analyze the correlation between $R$ and the heat of formation energy.
In order to validate our predictions quantitatively, we apply the least squares approach to model the relationship between our predicted values and the heat of formation energy.
A correlation coefficient is given by
\begin{equation*}
C=\frac{\displaystyle \sum_{i=1}^{N}(R_i-\bar{R})(E_i-\bar{E})}{\left[\displaystyle \sum_{i=1}^{N}(R_i-\bar{R})^2\displaystyle \sum_{i=1}^{N}(E_i-\bar{E})^2\right]^{\frac{1}{2}}},
\end{equation*}
where $N = 12$ is the number of fullerene molecules  considered, $R_i$ represents the quotient of $\ell_i$ and the number of atoms, and $E_i$ is the heat of formation energy of the $i$-th fullerene molecule. The parameter $\bar{R}$ and $\bar{E}$
are the corresponding mean values.
	\begin{figure}[htbp]
		\centering
		\includegraphics[scale=0.6]{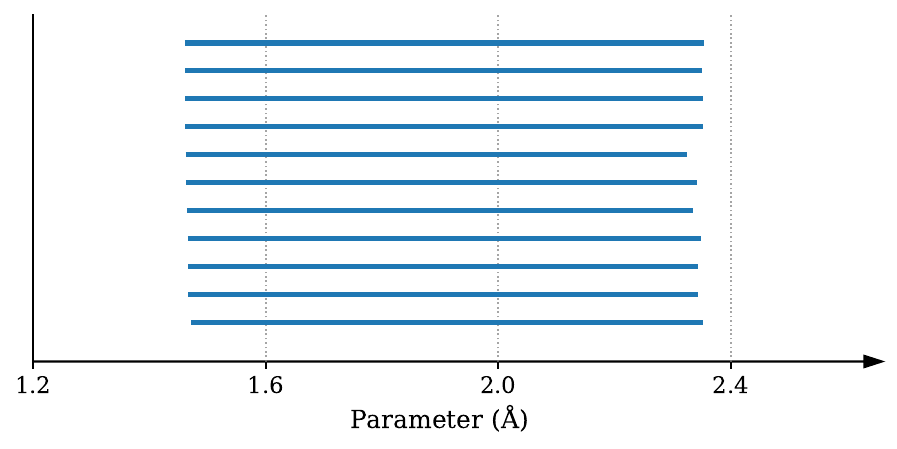}
		\caption{The barcode for the 1-dimensional homology of the fullerene molecule $\mathrm{C}_{20}$.}
		\label{duibitu}
	\end{figure}

Table \ref{table} lists the heat of formation energies  corresponding to different fullerene molecules. The unit for the heat of formation energy is eV/atom.
\begin{table}[h]
	\centering
	\begin{tabular}{c|c|c|c|c|c|c}
		\hline
		$N_{\text{atom}}$	& 20 & 24 & 26 & 28 & 30 & 32\\
		\hline
		$\text{Energy}$	& 1.180 & 1.050 & 0.989 & 0.912 & 0.850 & 0.781\\
		\hline\hline
		$N_{\text{atom}}$  & 36 &40 & 44 & 50 & 52 & 60\\
		\hline
		$\text{Energy}$	 & 0.706 &0.641 & 0.589 & 0.509 &0.502 &0.401 \\
		\hline
	\end{tabular}
	\caption{The heat of formation energy for small fullerene molecules.}
	\label{table}
\end{table}
The fitting result is depicted in Figure \ref{prediction}.
More specifically, Figure \ref{prediction} illustrates the comparison between our predicted values and the actual heat of formation energy. The left vertical axis corresponds to the heat of formation energy with the unit ev/atom, whereas the right vertical axis represents the ratio $R$ of the death time to the number of atoms. The red squares depict the variation of the heat of formation energy with respect to the number of atoms, while the green circles show the behavior of $R$ as a function of atomic number. The two lines align closely, confirming that our predictions capture the overall behavior of the heat of formation energy. The correlation coefficient reaches 0.979, suggesting the robustness of our model and underscoring the effectiveness of persistent automorphism module in making quantitative predictions. Compared to the correlation coefficients of 0.985 reported in \cite{KX2015} based on 9 fullerene molecules and 0.997 reported in \cite{JL2022} based on 9 fullerene molecules, our result achieves a correlation coefficient of 0.979 using 12 fullerene molecules, indicating greater generality.

Although our predictions for $\mathrm{C}_{20}$, $\mathrm{C}_{24}$ and $\mathrm{C}_{26}$ fullerenes deviate slightly from the reported energy profiles, which can be attributed to the data sets employed in our calculations are not identical to the ground-state data used in the literature, the essential features and relative trends of the energy profiles remain well captured. Furthermore, as illustrated in the figure, the two broken lines nearly coincide when the number of carbon atoms is greater than or equal to 30, which demonstrates the advantage of our model in predicting large fullerene molecules.
\begin{figure}[htbp]
	\centering
	\includegraphics[scale=0.45]{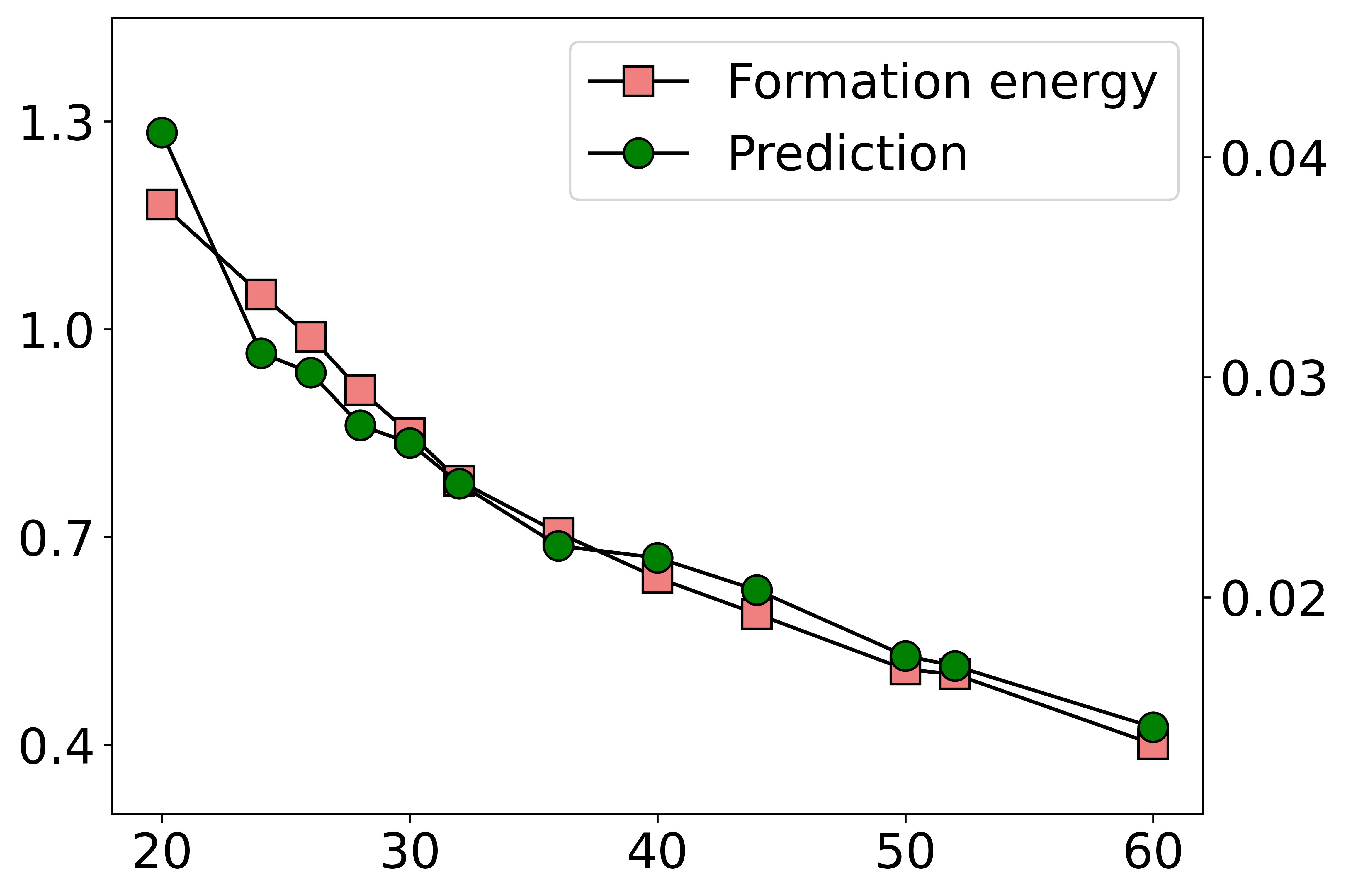}
	\caption{Comparison between the heat of formation energies and our model-predicted outcome.
	The left vertical axis corresponds to the heat of formation energy, expressed in eV per atom, whereas the right vertical axis corresponds to $R$.}
	\label{prediction}
\end{figure}

\section{Conclusion}

Topological methods, particularly persistent homology, have proven effective in capturing the inherent, stable structural features of complex data. This approach has demonstrated distinct advantages, especially in materials science and molecular biology. While topological features such as connected components, holes, and cavities reflect the structural information in data, the automorphism group of the data more effectively captures its symmetry information. Building on this insight, we integrate the multi-scale perspective of persistent homology into the automorphism group framework, thus introducing a multi-scale analysis of symmetries based on automorphism groups.

In this work, our main contribution is the introduction of multi-scale symmetry analysis (MSA). Specifically, our approach constructs the automorphism group of the Vietoris-Rips complex. For point cloud data, the Vietoris-Rips complex changes with respect to the distance parameter, leading to a multi-scale automorphism group. On one hand, we transform the computation of the automorphism group of simplicial complexes into the computation of the automorphism group of graphs, which simplifies the calculation of the Vietoris-Rips complex's automorphism group. On the other hand, since the automorphism group itself is not a functor, this presents a challenge in directly obtaining persistent symmetries. However, by modifying the graph category, we derive an automorphism module functor, ensuring the existence of persistent symmetry theory. In terms of application, we introduce the concepts of symmetry order and symmetry degree to characterize the richness of symmetry, and use these to analyze the multi-scale symmetries of fullerenes, ultimately predicting their stability. By predicting the stability of 12 fullerenes, we achieve a correlation coefficient of 0.979, which is more general and convincing compared to other models that predict fewer than 10 fullerenes.

In summary, multi-scale symmetry analysis holds great promise as a powerful tool for capturing and analyzing the inherent symmetries of data across different domains. With further theoretical development and application, it could open up new possibilities for understanding complex structures and patterns in materials science and molecular biology.

\section*{Acknowledgments}

This work was supported in part by the Natural Science Foundation of China (NSFC Grant No. 12401080), Scientific Research Foundation of Chongqing University of Technology and the Science and Technology Research Program of Chongqing Municipal Education Commission, grant number KJQN202501109.

\end{document}